\definecolor{refkey}{rgb}{0.6, 0.7, 0.4}
\definecolor{labelkey}{rgb}{0, 0.7, 0.5}
\newtheorem{theorem}{Theorem}[section]
\newtheorem{proposition}[theorem]{Proposition}
\newtheorem{lemma}[theorem]{Lemma}
\newtheorem{corollary}[theorem]{Corollary}
\theoremstyle{definition}
\newtheorem{definition}[theorem]{Definition}
\theoremstyle{remark}
\numberwithin{equation}{section}
\newcommand{\ZZ}{\mathbb{Z}}
\newcommand{\st}{\mathrm{star}}
\newcommand{\lk}{\mathrm{link}}
\title{Independence complexes of $(n \times 6)$-grid graphs}
\author[T. Matsushita]{Takahiro Matsushita}
\address{Department of Mathematical Sciences, University of the Ryukyus, Nishihara-cho, Okinawa 903-0213, Japan}
\email{mtst@sci.u-ryukyu.ac.jp}
\author[S. Wakatsuki]{Shun Wakatsuki}
\address{Graduate School of Mathematics, Nagoya University, Furocho, Chikusaku,
Nagoya, 464-8602, Japan}
\email{shun.wakatsuki@math.nagoya-u.ac.jp}
\keywords{independence complexes; square grid graphs}
\subjclass[2020]{55P10; 05C69}
\begin{document}

\baselineskip.525cm

\maketitle

\begin{abstract}
We determine the homotopy types of the independence complexes of the $(n \times 6)$-square grid graphs. In fact, we show that these complexes are homotopy equivalent to wedges of spheres.
\end{abstract}

\section{Introduction}

For a finite simple graph $G = (V,E)$, a subset $\sigma$ of the vertex set $V$ of $G$ is called \emph{independent} if no two elements in $\sigma$ are adjacent. The family of independent sets of $G$ forms a simplicial complex $I(G)$, which is called the \emph{independence complex of $G$}. Homotopy types of independence complexes and their connection with combinatorial properties of graphs have been extensively studied in the last two decades \cite{Kozlov book}.


The independence complex $I(G)$ of $G$ is the simplicial complex whose vertex set is $V$ and whose set of minimal non-faces is the set $E$ of edges in $G$. Hence the independence complex is another formulation of the clique complex, which has appeared in several branches in mathematics. The Vietoris--Rips complex, which appears in topological data analysis and geometric group theory \cite{PRSZ}, and the order complex of a poset \cite{Kozlov book} are typical examples of clique complexes. In particular, the barycentric subdivision of every simplicial complex is a clique complex, and hence a very wide class of geometric objects can be obtained from the independence complex. This means that the independence complex is a fundamental construction of simplicial complexes.

However, it is in general quite difficult to determine the homotopy type of independence complex $I(G)$ even if the graph $G$ can be easily described. It has been studied by several authors to determine the homotopy types or homotopy invariants of particular classes of graphs (\cite{Barmak}, \cite{Braun}, \cite{BH}, \cite{EH}, \cite{Engstrom}, \cite{GSS}, \cite{Iriye}, \cite{Kawamura}, \cite{Kozlov}, \cite{MT}, \cite{Matsushita1}, \cite{Matsushita2}).
In this paper, we treat the independence complexes of certain square grid graphs $\Gamma_{n,k}$ defined as follows: Let $n$ and $k$ be positive integers. We define the graph $\Gamma_{n,k}$ by
\[ V(\Gamma_{n,k}) = \{ (x,y) \in \ZZ^2\; | \; 1 \le x \le n, \; 1 \le y \le k\},\]
\[ E(\Gamma_{n,k}) = \{ \{ (x,y), (x', y')\} \; | \; (x,y), (x', y') \in V(\Gamma_{n,k}), \; |x' - x| + |y' - y| = 1\}.\]
The goal in this paper is to determine the homotopy type of $I(\Gamma_{n,6})$, for all $n$.

Before giving the precise statement of our main result, we review the background and known results in the independence complex of square grid graphs. The graph $\Gamma_{n,1}$ is the path graph $P_n$ of $n$ vertices, and the homotopy type of $I(\Gamma_{n,1})$ was determined by Kozlov \cite{Kozlov}. The homotopy types of $I(\Gamma_{n,2})$ and $I(\Gamma_{n,3})$ were determined by Adamaszek \cite{Adamaszek2}. The Euler characteristic of $I(\Gamma_{n,4})$ was determined by Okura \cite{Okura}, and the homotopy types of $I(\Gamma_{n,4})$ and $I(\Gamma_{n,5})$ were recently determined by the authors \cite{MW}.

Our grid graph $\Gamma_{n,k}$ is the cartesian product $P_n \times P_k$ of two path graphs $P_n$ and $P_k$. As related graphs, the independence complexes of $C_n \times C_k$ and $P_n \times C_k$ have also been studied. Here $C_n$ denotes the cycle graph with $n$ vertices.
Fendley, Schoutens, and van Eersten \cite{FSV} suggested several conjectures related to the Euler characteristic of $I(C_n \times C_k)$ from a viewpoint of statistical physics. Jonsson \cite{Jonsson1} solved one of their conjectures, which states that the reduced Euler characteristic of $I(C_n \times C_k)$ is $1$ when $n$ and $k$ are coprime. After that, Bousquet-M\'elou, Linusson, and Nevo \cite{BLN} studied the homotopy types of several families of independence complexes of grid graphs, and since then the independence complexes of $C_n \times C_k$, $P_n \times C_k$, and $P_n \times P_k$ have been studied by several authors (\cite{Adamaszek2}, \cite{Iriye}, \cite{Jonsson2}, \cite{Jonsson3}, \cite{MW}, \cite{Okura}, \cite{Thapper}).

Now we state our main result in this paper:

\begin{table}[t]
  \begin{tabular}{cccccccc}
    $k$   & 0 & 1 & 2 & 3 & 4 & 5 & 6 \\
    \hline
    $\nu$ & 0 & 0 & 0 & 2 & 2 & 4 & 4 \\
    $\mu$ & 2 & 2 & 4 & 4 & - & - & -
  \end{tabular}
  \caption{} \label{table:nuMuValues}
\end{table}

\begin{theorem} \label{main theorem}
  For $n\geq 1$, the homotopy type of $I(\Gamma_{n,6})$ is given as follows.
  \begin{enumerate}[$(1)$]
    \item Assume that $n$ is odd and write $n=14m+2k+1$ with $m\geq 0$ and $0\leq k\leq 6$.
      Then there is a homotopy equivalence:
      \[
      I(\Gamma_{n,6})\simeq
      S^{n'}
      \vee\Bigl(\bigvee_{n'-m\leq i\leq n'-1}\bigvee_6S^i\Bigr)
      \vee\bigvee_\nu S^{n'-m-1},
      \]
      where $n'=21m+3k+1$ and $\nu$ is a number defined by Table \ref{table:nuMuValues}.
    \item Assume that $n$ is even and write $n=14m+2k$ with $m\geq 0$ and $0\leq k\leq 6$.
      If $n\leq 6$, we have
      \[
      I(\Gamma_{2,6}) \simeq S^2, \;
      I(\Gamma_{4,6}) \simeq \bigvee_3S^5, \;
      I(\Gamma_{6,6}) \simeq \bigvee_3S^8.\]
      If $n>6$, there is a homotopy equivalence:
      \[
      I(\Gamma_{n,6}) \simeq
      \begin{cases}
        \displaystyle
        \bigvee_5S^{n'}
        \vee\Bigl(\bigvee_{n'-m< i\leq n'-1}\bigvee_6S^i\Bigr)
        \vee\bigvee_\mu S^{n'-m} & (0\leq k\leq 3, m\geq 1) \\
        \displaystyle
        \bigvee_5S^{n'}
        \vee\Bigl(\bigvee_{n'-m\leq i\leq n'-1}\bigvee_6S^i\Bigr) & (4\leq k\leq 6),
      \end{cases}
      \]
      where $n'=21m+3k-1$ and $\mu$ is a number defined by Table \ref{table:nuMuValues}.
  \end{enumerate}
\end{theorem}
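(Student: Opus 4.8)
The plan is to reduce $I(\Gamma_{n,6})$ to independence complexes of smaller grids by repeatedly stripping vertices off the last column, using the two standard homotopy-theoretic tools for independence complexes. The first is the cofibration attached to a vertex $v$: since the star $v*I(\Gamma\setminus N[v])$ is a cone glued to $I(\Gamma\setminus v)$ along $\lk(v)=I(\Gamma\setminus N[v])$, one obtains a cofiber sequence $I(\Gamma\setminus N[v])\to I(\Gamma\setminus v)\to I(\Gamma)$, whence $I(\Gamma)\simeq\Sigma I(\Gamma\setminus N[v])$ whenever $I(\Gamma\setminus v)$ is contractible, and $I(\Gamma)\simeq I(\Gamma\setminus v)$ whenever $I(\Gamma\setminus N[v])$ is contractible. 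The second is the fold (domination) lemma: if $v\neq w$ and $N(v)\subseteq N(w)$, then $I(\Gamma)\simeq I(\Gamma\setminus w)$. I would carry out all reductions with these two lemmas, never needing ad hoc arguments.

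As a representative first move, take the corner $v=(n,1)$. Its neighbours are $(n-1,1)$ and $(n,2)$, both of which are also neighbours of $(n-1,2)$, so $N((n,1))\subseteq N((n-1,2))$ and the fold lemma removes the interior vertex $(n-1,2)$ without changing the homotopy type. After this deletion the corner vertices become leaves or dominated vertices, triggering a cascade of further folds together with suspensions coming from the cofiber sequence. The key bookkeeping device is an auxiliary family of grids with a ragged right boundary: for $S\subseteq\{1,\dots,6\}$ let $\Gamma_{n,6}^{S}$ be $\Gamma_{n,6}$ with the vertices $\{(n,i):i\notin S\}$ of the last column deleted. I would show that each corner reduction expresses $I(\Gamma_{n,6}^{S})$, up to a suspension recording a shift in sphere dimension, in terms of the $I(\Gamma_{n-1,6}^{T})$ and the $I(\Gamma_{n,6}^{T})$ with $|T|<|S|$, so that the whole collection satisfies a closed finite system of recursions in $n$.

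Because every complex in sight turns out to be a wedge of spheres, I would record each homotopy type by its Poincar\'e series $\sum_i b_i t^i$, where $b_i$ counts the $S^i$-summands, and rewrite the geometric recursions as linear recursions on these series (suspension multiplies by $t$, wedge adds). Transferring across one column is then a fixed linear operation, and iterating it one finds that the pattern stabilises with period $14$ in $n$ while the top dimension advances by $21$; solving the recursion yields the leading summand $S^{n'}$ with $n'=21m+3k\pm1$, the staircase $\bigvee_{n'-m\le i\le n'-1}\bigvee_6 S^i$ of intermediate spheres accumulated over the $m$ periods, and the low-dimensional correction terms counted by $\nu$ and $\mu$, whose values depend on $k\bmod 7$ and on the parity of $n$.

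The main obstacle is twofold. First, one must choose the auxiliary family of ragged grids large enough that the recursion closes but small enough to solve, and verify at each step that the relevant subcomplex $I(\Gamma\setminus v)$ or $I(\Gamma\setminus N[v])$ is genuinely contractible, or that a fold applies; this is the delicate combinatorial heart of the argument and where the period $14$ and the shift $21$ are forced. Second, after solving the generic recursion one must nail down the exact correction terms governed by Table~\ref{table:nuMuValues} and the case split $0\le k\le 3$ versus $4\le k\le 6$, which requires anchoring the induction on explicit base cases. I would finish by computing $I(\Gamma_{2,6})\simeq S^2$, $I(\Gamma_{4,6})\simeq\bigvee_3 S^5$, $I(\Gamma_{6,6})\simeq\bigvee_3 S^8$ and the first few odd cases directly, by hand or by computer, thereby pinning the values of $\nu$ and $\mu$ and completing the induction.
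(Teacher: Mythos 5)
Your plan has the right scaffolding (a ragged-right-boundary family, a column-by-column recursion, period $14$ with dimension shift $21$, explicit base cases), but the engine that drives the whole argument is missing. The only uses of the cofiber sequence you allow are the two degenerate ones --- $I(G)\simeq \Sigma I(G-N[v])$ when $I(G-v)$ is contractible, and $I(G)\simeq I(G-v)$ when $I(G-N[v])$ is contractible --- together with folds. None of these moves can create a wedge: each one replaces a complex either by a homotopy-equivalent smaller complex or by a suspension of a single smaller complex. Unrolling such a recursion, $I(\Gamma_{n,6})$ would be an iterated suspension of one of finitely many base-case complexes, so the number of distinct sphere dimensions appearing in it would stay bounded as $n\to\infty$. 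But the theorem you are proving exhibits spheres in roughly $m+2\approx n/14$ distinct dimensions, growing without bound; hence no argument built only from your stated moves can terminate with the correct answer. Your Poincar\'e-series bookkeeping (``wedge adds'') silently assumes splittings that your toolkit never produces.

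The missing ingredient is the third, non-degenerate use of the cofiber sequence: if the inclusion $I(G-N[v])\hookrightarrow I(G-v)$ is merely \emph{null-homotopic} (with neither side contractible), then $I(G)\simeq I(G-v)\vee \Sigma I(G-N[v])$. Every essential step in the paper is of this form. For instance, with $A_n=\Gamma_n-\{(n,1),(n,5)\}$, $B_n=\Gamma_n-\{(n,4)\}$ and $v=(n,3)$, one has for $n=2k$ that $I(A_{2k}-v)\simeq S^{3k-1}$ and $I(A_{2k}-N[v])\simeq \Sigma^3 I(B_{2k-3})$, both non-contractible, yet the splitting $I(A_{2k})\simeq S^{3k-1}\vee\Sigma^4 I(B_{2k-3})$ holds. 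Verifying the null-homotopy is where the real work lies: the paper proves, by a simultaneous induction, both the homotopy-type recursions for $I(A_n)$ and $I(B_n)$ \emph{and} upper bounds on the dimensions of their wedge summands, so that each inclusion is a map from a wedge of spheres of dimension at most $d$ into a sphere of dimension greater than $d$, hence null-homotopic. (Handling $\Gamma_n$ itself needs one more idea, a reflection symmetry $(i,j)\mapsto(i,7-j)$ identifying the relevant link with one already controlled.) Without this simultaneous induction --- or some replacement for it --- your recursion does not close, and the contractibility checks you propose to do ``at each step'' will simply fail, because in the steps that matter nothing is contractible.
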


Finally, we discuss the Euler characteristic of $I(\Gamma_{n,k})$. For a positive integer $k$, let $f_k$ denote the function assigning $\chi(I(\Gamma_{n,k}))$ to $n$. The known results (\cite{Kozlov}, \cite{Adamaszek2}, \cite{MW}) showed that $f_1$, $f_2$, $f_3$, $f_5$ are periodic functions with periods $6$, $4$, $8$, and $40$, respectively. On the other hand, by the computation of Okura \cite{Okura}, $f_4$ is not a bounded function and $|f_4(n)|$ tends to infinity.
Our main result implies the following:

\begin{corollary}
  The function $f_6$ assigning $\chi(I(\Gamma_{n,6}))$ to $n$ is a periodic function with period $28$,
  whose values are shown in Table \ref{table:f6Values}.
\end{corollary}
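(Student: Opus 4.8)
The plan is to read the Euler characteristic directly off the homotopy decompositions supplied by Theorem~\ref{main theorem}. Since reduced Euler characteristic is additive over wedges and $\tilde\chi(S^d)=(-1)^d$, any wedge of spheres $\bigvee_j S^{d_j}$ has $\chi = 1 + \sum_j (-1)^{d_j}$. Feeding each case of the theorem into this identity produces a closed expression for $f_6(n)=\chi(I(\Gamma_{n,6}))$ in terms of $m$, $k$, and the table value $\nu$ or $\mu$. The only nonroutine ingredient is the alternating block $\sum_{i=n'-m}^{n'-1}(-1)^i$ (and its open-ended analogue in the even case), which vanishes when $m$ is even and equals $\pm 1$ when $m$ is odd, the sign being governed by the parities of $n'$ and $m$.

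Next I would observe that each such expression depends on $m$ only through its parity. Replacing $m$ by $m+2$ fixes $k$, sends $n'\mapsto n'+42$ (hence preserves the parity of $n'$), preserves the parity of $m$, and leaves $\nu$ and $\mu$ untouched since by Table~\ref{table:nuMuValues} they depend only on $k$; therefore every sign $(-1)^{n'}$, $(-1)^{n'-m}$, $(-1)^{n'-m-1}$ and every alternating block in the formula is unchanged. As $m\mapsto m+2$ corresponds exactly to $n\mapsto n+28$ in both the odd parametrization $n=14m+2k+1$ and the even one $n=14m+2k$, this yields $f_6(n)=f_6(n+28)$, so $f_6$ is periodic with period dividing $28$. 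To turn ``depends only on $(k,m\bmod 2)$'' into ``depends only on $n\bmod 28$'', I would check that $n\mapsto(k,m\bmod 2)$ is a bijection from the $14$ odd residues modulo $28$ onto the pairs with $0\le k\le 6$ (the seven residues $2k+1$ arising for $m$ even, the seven residues $15+2k$ for $m$ odd), and likewise for the $14$ even residues.

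I would then dispatch the small even cases and confirm minimality. The even formulas exclude $m=0$ for $0\le k\le 3$, so $n\in\{2,4,6\}$ must be treated from the explicit types $S^2$, $\bigvee_3 S^5$, $\bigvee_3 S^8$, giving $\chi=2,-2,4$; I would verify these coincide with the values the general formula assigns to $n\in\{30,32,34\}$ (for $n=30$ one finds $\tilde\chi = 5 - 6 + 2 = 1$, so $\chi=2$, matching $n=2$), confirming the period-$28$ identity persists down to $n=1$. Evaluating the formulas at the seven values $k=0,\dots,6$ for each parity of $m$ then populates Table~\ref{table:f6Values}, and inspecting that the resulting $28$ entries are invariant under no proper divisor of $28$ --- the even-$n$ and odd-$n$ values already differ, excluding the odd divisors, and the tabulated entries separate the classes modulo $2$, $4$, and $14$ --- shows the minimal period is exactly $28$.

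The main obstacle is the parity bookkeeping: one must track the three intertwined parities of $n$, $n'$, and $m$ consistently across the two parametrizations and confirm the claimed bijection onto residues modulo $28$, and one must check that the exceptional values at $n=2,4,6$ genuinely fit the periodic pattern rather than breaking it. These are finite, mechanical verifications, so no serious conceptual difficulty remains once the additivity reduction of the first paragraph is in place.
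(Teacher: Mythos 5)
Your proposal is correct and follows exactly the route the paper intends: the corollary is stated as a direct consequence of Theorem \ref{main theorem}, obtained by computing Euler characteristics from the wedge-of-spheres decompositions via $\tilde\chi(S^d)=(-1)^d$, observing that each formula depends on $m$ only through its parity (so $m\mapsto m+2$, i.e.\ $n\mapsto n+28$, preserves $\chi$), and checking the exceptional cases $n=2,4,6$ separately. Your parity bookkeeping and spot-checks (e.g.\ $n=30$ giving $\chi=2$) are accurate, so nothing is missing.
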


\begin{table}[t]
  \begin{tabular}{ cccccccccccccccccccccccccccc }
    \(n\)      & 1& 2& 3& 4& 5& 6& 7& 8& 9&10&11&12&13&14 \\
    \(f_6(n)\) & 0& 2& 2&-2& 0& 4& 0&-4& 2& 6&-2&-4& 4& 4 \\[10pt]
    \(n\)      &15&16&17&18&19&20&21&22&23&24&25&26&27&28 \\
    \(f_6(n)\) &-4&-2& 6& 2&-4& 0& 4& 0&-2& 2& 2& 0& 0& 0
  \end{tabular}
  \caption{} \label{table:f6Values}
\end{table}


The rest of this paper is organized as follows: In Section 2, we review several definitions and facts related to independence complex. Section 3 is devoted to the proof of Theorem \ref{main theorem}.

\section{Preliminaries}

In this section, we review definitions and facts related to independence complexes which we need in this paper. Our main tools are the cofiber sequence (Theorem \ref{thm cofiber}) and the fold lemma (Corollary \ref{cor fold lemma}). For a comprehensive introduction to simplicial complexes, we refer to \cite{Kozlov book}.

A \emph{(finite simple) graph} is a pair $(V,E)$ such that $V$ is a finite set and $E$ is a subset of the family $\binom{V}{2}$ of $2$-element subsets of $V$. A subset $\sigma$ of $V$ is \emph{independent} or \emph{stable} if there are no $v,w \in \sigma$ such that $\{ v,w \} \in E$. The \emph{independence complex of a graph $G$} is the simplicial complex whose vertex set is $V$ and whose simplices are independent sets in $G$, and is denoted by $I(G)$.

A \emph{subgraph of $G = (V,E)$} is a graph $G' = (V', E')$ such that $V' \subset V$ and $E' \subset E$. The subgraph $G'$ of $G$ is said to be \emph{induced} if $E' = \binom{V'}{2} \cap E$. For a subset $S$ of $V$, we write $G - S$ to mean the induced subgraph of $G$ whose vertex set is $V - S$. For a vertex $v$ of $G$, we write $G - v$ instead of $G - \{ v\}$. Note that a subgraph $G'$ of $G$ is induced if and only if $I(G')$ is a subcomplex of $I(G)$.

If $G$ is a disjoint union $G_1 \sqcup G_2$ of two subgraphs $G_1$ and $G_2$, then $I(G)$ coincides with the join $I(G_1) * I(G_2)$. In particular, if $G$ has an isolated vertex, then $I(G)$ is contractible. Let $K_2$ denote the complete graph with $2$ vertices, \emph{i.e.} $K_2$ is the graph consisting of two vertices and one edge. Since $I(K_2) = S^0$, we have $I(K_2 \sqcup G) \simeq \Sigma I(G)$. Here, $\Sigma$ denotes a suspension.

Let $K$ be an (abstract) simplicial complex and $S$ a subset of $V(K)$. Let $K - S$ denote the simplicial complex consisting of the simplices of $K$ disjoint from $S$. For a face $\sigma$ of $K$, define the \emph{star} $\st(\sigma)$ and the \emph{link} $\lk(\sigma)$ by
\[ \st(\sigma) = \{ \tau \in K \; | \; \sigma \cup \tau \in K \},\; \lk(\sigma) = \st(\sigma) - \sigma.\]
For $v \in V$, we write $\st(v)$ and $\lk(v)$ instead of $\st(\{ v\})$ and $\lk (\{ v\})$, respectively.

There is a functor called geometric realization from the category of simplicial complexes to the category of topological spaces \cite{Kozlov book}. We often identify the simplicial complex $K$ with its geometric realization $|K|$, and apply topological terms to simplicial complexes.

\begin{lemma}
  Let $K$ be a simplicial complex and $v$ a vertex of $K$.
  Then $K$ is homeomorphic to the mapping cone of the inclusion $\lk(v) \hookrightarrow K - v$,
  and hence there is a cofiber sequence
  \[\lk(v)\to K - v \to K.\]
  In particular, if the inclusion $\lk (v) \hookrightarrow K - v$ is null-homotopic,
  then
\[ K \simeq (K - v) \vee \Sigma  \lk (v).\]
\end{lemma}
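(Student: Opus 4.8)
The plan is to realize $K$ as a union of two subcomplexes and recognize the gluing data as a mapping cone. First I would observe that the closed star $\st(v)$ is the simplicial cone $v * \lk(v)$: every face of $\st(v)$ is either a face $\sigma$ of $\lk(v)$ or of the form $\sigma \cup \{v\}$ with $\sigma \in \lk(v)$, so geometrically $|\st(v)|$ is the cone on $|\lk(v)|$ with apex $v$, with base $|\lk(v)|$. In particular $\st(v)$ is contractible.

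Next I would verify the two combinatorial identities $K = (K-v) \cup \st(v)$ and $(K-v) \cap \st(v) = \lk(v)$. The first holds because any face $\tau \in K$ lies in $K-v$ when $v \notin \tau$ and in $\st(v)$ when $v \in \tau$ (as then $\tau \cup \{v\} = \tau \in K$); the second because a face of $\st(v)$ avoiding $v$ is precisely an element of $\lk(v)$. Together these exhibit $K$ as the amalgamated union of $K-v$ and $\st(v)$ along $\lk(v)$. Passing to geometric realizations and using $|\st(v)| \cong C|\lk(v)|$ turns this into $|K| = |K-v| \cup_{|\lk(v)|} C|\lk(v)|$, which is by definition the mapping cone of the inclusion $\lk(v) \hookrightarrow K-v$. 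This yields the claimed homeomorphism, and the associated sequence $\lk(v) \to K-v \to K$ is the standard cofiber sequence attached to any mapping cone, the second map being the inclusion $K-v \hookrightarrow K$.

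For the splitting statement I would invoke the homotopy invariance of the mapping cone construction: if the attaching map $f : \lk(v) \to K-v$ is null-homotopic, then its mapping cone is homotopy equivalent to the mapping cone of a constant map, and the mapping cone of a constant map $A \to X$ is $X \vee \Sigma A$. Indeed, collapsing $A$ to a point at the base of the cone attaches $CA$ to $X$ at a single point, and the quotient $CA/A$ obtained by also collapsing the apex is $\Sigma A$. Hence $K \simeq (K-v) \vee \Sigma \lk(v)$.

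The computation is essentially routine, so the only place needing a little care is this last step. The homotopy invariance of the mapping cone requires that the inclusion of the subcomplex $\lk(v)$ into $K-v$ be a cofibration, so that the strict mapping cone computes the homotopy cofiber and depends only on the homotopy class of $f$; this cofibration property is automatic for inclusions of subcomplexes, so no real obstacle arises.
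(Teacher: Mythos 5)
Your proposal is correct, and it supplies precisely the standard argument that the paper itself omits: the lemma is stated there without proof, and your decomposition $K = (K-v) \cup \st(v)$, $(K-v) \cap \st(v) = \lk(v)$, together with the identification of $\st(v) = v * \lk(v)$ as a cone on $\lk(v)$, is exactly the reasoning the paper implicitly relies on. Your treatment of the null-homotopic case (homotopy invariance of the mapping cone, which holds since the inclusion of a subcomplex is a cofibration, plus the identification of the cone of a constant map with $(K-v) \vee \Sigma \lk(v)$) is likewise the intended justification and is sound.
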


Let $G$ be a graph and let $v \in V(G)$. Let $N(v)$ denote the set of vertices of $G$ adjacent to $v$, and set $N[v] = N(v) \cup \{ v\}$. It is straightforward to see $I(G) - v = I(G - v)$ and $\lk(v) = I(G - N[v])$.
Applying the above lemma to $K=I(G)$, we have the following:

\begin{theorem}[see \cite{Adamaszek1}] \label{thm cofiber}
Let $G$ be a graph and $v$ a vertex of $G$. Then there is a cofiber sequence
\[ I(G - N[v]) \to I(G - v) \to I(G).\]
In particular, if the inclusion $I(G - N[v]) \to I(G - v)$ is null-homotopic, then
\[ I(G) \simeq I(G - v) \vee \Sigma I(G - N[v]).\]
\end{theorem}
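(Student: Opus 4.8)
The plan is to deduce the statement directly from the preceding lemma applied to the simplicial complex $K = I(G)$, so that the only real work is to translate the two simplicial notions appearing there, namely $K - v$ and $\lk(v)$, into the graph-theoretic language of induced subgraphs. Once these translations are in place, both the cofiber sequence and the splitting under the null-homotopy hypothesis are immediate.

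First I would verify the identity $I(G) - v = I(G - v)$. By definition $I(G) - v$ consists of the faces of $I(G)$ not containing $v$, that is, the independent sets $\sigma$ of $G$ with $v \notin \sigma$. Such a $\sigma$ is precisely a subset of $V(G) - \{v\} = V(G - v)$ containing no edge of $G$. Since $G - v$ is the \emph{induced} subgraph on this vertex set, the edges of $G$ joining vertices of $V(G-v)$ are exactly the edges of $G - v$, so $\sigma$ is independent in $G$ if and only if it is independent in $G - v$. (In the language recalled in the preliminaries, this is the observation that $G - v$ being induced makes $I(G-v)$ a subcomplex of $I(G)$.) Hence $I(G) - v = I(G - v)$ as simplicial complexes.

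Next I would verify $\lk(v) = I(G - N[v])$. From the definitions $\st(v) = \{\tau \in I(G) \mid \{v\} \cup \tau \in I(G)\}$ and $\lk(v) = \st(v) - \{v\}$, a face $\tau$ lies in $\lk(v)$ exactly when $v \notin \tau$ and $\tau \cup \{v\}$ is independent in $G$. The latter condition forces $\tau$ to be independent and to contain no neighbour of $v$, i.e. $\tau \cap N(v) = \emptyset$; together with $v \notin \tau$ this says precisely that $\tau \subseteq V(G) - N[v]$ and $\tau$ is independent. As before, independence in $G$ restricted to this vertex set coincides with independence in the induced subgraph $G - N[v]$, giving $\lk(v) = I(G - N[v])$.

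With these two identifications, the preceding lemma applied to $K = I(G)$ produces the cofiber sequence $\lk(v) \to I(G) - v \to I(G)$, which is exactly the asserted sequence $I(G - N[v]) \to I(G - v) \to I(G)$; and when the inclusion $I(G - N[v]) \to I(G - v)$ is null-homotopic, the lemma's splitting $K \simeq (K - v) \vee \Sigma \lk(v)$ becomes $I(G) \simeq I(G - v) \vee \Sigma I(G - N[v])$. Since the statement is a formal consequence of the lemma, I do not expect a substantive obstacle; the only point requiring care is checking that the two combinatorial identities hold on the nose, as equalities of simplicial complexes rather than merely up to homotopy, which is what makes the lemma applicable verbatim.
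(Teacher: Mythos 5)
Your proposal is correct and follows exactly the paper's route: the paper likewise deduces the theorem by applying the preceding lemma to $K = I(G)$ via the identifications $I(G) - v = I(G - v)$ and $\lk(v) = I(G - N[v])$, which it declares ``straightforward to see'' and which you have verified in detail. No discrepancies.
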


This theorem implies the following simple argument, which will be frequently used in the subsequent section.

\begin{corollary}[fold lemma \cite{Engstrom}] \label{cor fold lemma}
Let $G$ be a graph, and $v$, $w$ vertices of $G$. Assume that $v \ne w$ and $N(v) \subset N(w)$. Then the inclusion $I(G - w) \hookrightarrow I(G)$ is a homotopy equivalence.
\end{corollary}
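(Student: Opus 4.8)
The plan is to apply Theorem \ref{thm cofiber} to the vertex $w$ and arrange that the left-hand term of the resulting cofiber sequence is contractible, which will force the inclusion $I(G-w)\hookrightarrow I(G)$ to be a homotopy equivalence. Concretely, the cofiber sequence reads
\[ I(G - N[w]) \to I(G - w) \to I(G), \]
and the lemma stated just before Theorem \ref{thm cofiber} identifies $I(G)$ with the mapping cone of the inclusion $\lk(w) = I(G - N[w]) \hookrightarrow I(G) - w = I(G - w)$. The whole argument therefore reduces to showing that $I(G - N[w])$ is contractible.

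The key combinatorial step is to prove that $v$ is an isolated vertex of the induced subgraph $G - N[w]$. First I would check that $v$ lies in $G - N[w]$ at all: we have $v \neq w$ by hypothesis, and $v \notin N(w)$, since otherwise $w \in N(v) \subset N(w)$ would force the loop $w \in N(w)$, which is impossible for a simple graph; thus $v \in V(G) - N[w]$. Next, every neighbor of $v$ in $G$ belongs to $N(v) \subset N(w) \subset N[w]$ and has therefore been deleted, so $v$ has no neighbors in $G - N[w]$, i.e.\ it is isolated there. By the fact recorded earlier in the section, a graph with an isolated vertex has contractible independence complex, so $I(G - N[w])$ is contractible.

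It remains to pass from contractibility of the left-hand term to the assertion that the inclusion itself is an equivalence, and this is the step I expect to require the most care: the cofiber sequence as stated yields a splitting $I(G)\simeq I(G-w)\vee\Sigma I(G-N[w])$ only under a null-homotopy hypothesis, and gives a homotopy equivalence of spaces rather than directly identifying the inclusion as the equivalence. To obtain the sharper statement I would argue at the level of the mapping cone $C_f$ of $f\colon A\to X$ with $A = I(G - N[w])$ contractible and $X = I(G-w)$, invoking the standard fact that the inclusion $X\hookrightarrow C_f$ is a homotopy equivalence whenever $A$ is contractible. This follows from the gluing lemma for homotopy equivalences applied to the pushout $C_f = X \cup_A CA$: comparing this square with the one gluing a point to $X$ along the (contractible) cone yields a homotopy equivalence $C_f \to X$ that is a retraction of the inclusion, whence the inclusion is itself a homotopy equivalence. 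Identifying this inclusion with $I(G-w) \hookrightarrow I(G)$ then completes the proof.
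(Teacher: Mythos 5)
Your proof is correct and follows exactly the route the paper intends: the paper states the fold lemma as a consequence of Theorem \ref{thm cofiber}, and your argument---$v$ survives in $G - N[w]$ because $N(v) \subset N(w)$ forces $v \notin N[w]$, $v$ is isolated there so $I(G - N[w])$ is contractible, and the mapping cone of a map out of a contractible space deformation retracts to its target---is precisely that derivation with the details (including the gluing-lemma step identifying the inclusion itself as the equivalence) filled in carefully.
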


\section{Proofs}

In this section, we write $\Gamma_n$ instead of $\Gamma_{n,6}$. The goal of this section is to prove Theorem \ref{main theorem}, which determine the homotopy type of $I(\Gamma_n)$.

\begin{figure}[t]
\begin{picture}(300,140)(0,-20)
\multiput(30,0)(0,20){6}{\circle*{3}}
\multiput(50,0)(0,20){6}{\circle*{3}}
\multiput(70,0)(0,20){6}{\circle*{3}}
\multiput(90,0)(0,20){6}{\circle*{3}}
\multiput(110,20)(0,40){3}{\circle*{3}}

\multiput(30,0)(20,0){4}{\line(0,1){100}}

\multiput(20,20)(0,40){3}{\line(1,0){90}}
\multiput(20,0)(0,40){3}{\line(1,0){70}}

\put(55,-20){$X_n$}
\put(0,47){$\cdots$}

\multiput(190,0)(0,20){6}{\circle*{3}}
\multiput(210,0)(0,20){6}{\circle*{3}}
\multiput(230,0)(0,20){6}{\circle*{3}}
\multiput(250,0)(0,20){6}{\circle*{3}}
\multiput(270,0)(0,20){2}{\circle*{3}}
\multiput(270,80)(0,20){2}{\circle*{3}}

\multiput(190,0)(20,0){4}{\line(0,1){100}}
\multiput(270,0)(0,80){2}{\line(0,1){20}}
\multiput(180,0)(0,20){2}{\line(1,0){90}}
\multiput(180,80)(0,20){2}{\line(1,0){90}}
\multiput(180,40)(0,20){2}{\line(1,0){70}}

\put(215,-20){$Y_n$}
\put(160,47){$\cdots$}
\end{picture}
\caption{} \label{figure XY}
\end{figure}

\subsection{Subgraphs $X_n$, $Y_n$, $A_n$ and $B_n$}
In this subsection, we determine the homotopy types of independence complexes of several induced subgraphs $X_n$, $Y_n$, $A_n$ and $B_n$ of $\Gamma_n$, and relates them to the homotopy type of the independence complex $I(\Gamma_n)$ of $\Gamma_n$.

\begin{definition} \label{definition subgraphs}
For $n \ge 1$, define the induced subgraphs $X_n$, $Y_n$, $A_n$ and $B_n$ of $\Gamma_n$ by
\[ X_n = \Gamma_n - \{ (n,1), (n,3), (n,5)\},\]
\[ Y_n = \Gamma_n - \{ (n,3), (n,4)\},\]
\[ A_n = \Gamma_n - \{ (n,1), (n,5)\},\]
\[ B_n = \Gamma_n - \{ (n,4)\}.\]
See Figures \ref{figure XY} and \ref{figure AB}.
\end{definition}

\begin{figure}[b]
\begin{picture}(300,140)(0,-20)
\multiput(30,0)(0,20){6}{\circle*{3}}
\multiput(50,0)(0,20){6}{\circle*{3}}
\multiput(70,0)(0,20){6}{\circle*{3}}
\multiput(90,0)(0,20){6}{\circle*{3}}
\multiput(110,20)(0,20){3}{\circle*{3}}
\put(110,100){\circle*{3}}

\multiput(30,0)(20,0){4}{\line(0,1){100}}
\put(110,20){\line(0,1){40}}
\multiput(20,20)(0,20){3}{\line(1,0){90}}
\multiput(20,0)(0,80){2}{\line(1,0){70}}
\put(20,100){\line(1,0){90}}

\put(55,-20){$A_n$}
\put(0,47){$\cdots$}
\put(115,37){$v_n$}

\multiput(190,0)(0,20){6}{\circle*{3}}
\multiput(210,0)(0,20){6}{\circle*{3}}
\multiput(230,0)(0,20){6}{\circle*{3}}
\multiput(250,0)(0,20){6}{\circle*{3}}
\multiput(270,0)(0,20){3}{\circle*{3}}
\multiput(270,80)(0,20){2}{\circle*{3}}

\multiput(190,0)(20,0){4}{\line(0,1){100}}
\put(270,0){\line(0,1){40}}
\put(270,80){\line(0,1){20}}
\multiput(180,0)(0,20){3}{\line(1,0){90}}
\multiput(180,80)(0,20){2}{\line(1,0){90}}
\put(180,60){\line(1,0){70}}

\put(215,-20){$B_n$}
\put(160,47){$\cdots$}
\put(275,37){$v_n$}
\end{picture}
\caption{} \label{figure AB}
\end{figure}

The homotopy type of $I(\Gamma_n)$ is determined by Lemmas \ref{lemma X} and \ref{lemma Y}, Corollaries \ref{corollary key} and \ref{corollary small A}, and Proposition \ref{proposition key 2}. Lemmas \ref{lemma X} and \ref{lemma Y} are easy consequences of the fold lemma (Corollary \ref{cor fold lemma}). The others are mainly deduced by Proposition \ref{proposition key}. In this subsection, we prove these propositions, and the determination of the homotopy type of $I(\Gamma_n)$ is postponed to the next subsection.

We first prove Lemmas \ref{lemma X} and \ref{lemma Y}, which determine the homotopy types of $I(X_n)$ and $I(Y_n)$. Here we write ${\rm pt}$ to mean the topological space consisting of one point.

\begin{lemma} \label{lemma X}
For $n\ge 1$,
there is a following homotopy equivalence:
\[ I(X_n) \simeq \begin{cases}
{\rm pt} & (n = 2k+1) \\
S^{3k-1} & (n = 2k).
\end{cases}\]
\end{lemma}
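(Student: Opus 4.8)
The plan is to repeatedly apply the fold lemma (Corollary \ref{cor fold lemma}) and the cofiber sequence (Theorem \ref{thm cofiber}) to peel off the rightmost column of $X_n$, reducing the computation of $I(X_n)$ to that of $I(X_{n-2})$ and thereby setting up an induction on $n$. Recall that $X_n$ is obtained from the $n\times 6$ grid by deleting the three vertices $(n,1),(n,3),(n,5)$ in the last column, so the last column retains only $(n,2),(n,4),(n,6)$. First I would establish the two base cases. For $n=1$, the surviving vertices $(1,2),(1,4),(1,6)$ are mutually non-adjacent in the column, but each is adjacent to vertices of the previous structure; more directly, one checks that $X_1$ has an isolated vertex (or a vertex whose neighborhood is contained in another's), forcing $I(X_1)\simeq\mathrm{pt}$. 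For $n=2$, a direct computation (again via folds, or by recognizing $X_2$ as a disjoint union of $K_2$'s with isolated or dominated vertices removed) should yield $I(X_2)\simeq S^{2}=S^{3\cdot1-1}$, matching the claimed formula.

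For the inductive step I would analyze the rightmost two columns. The key observation is that the three deleted vertices create vertices in column $n$ and column $n-1$ whose neighborhoods are comparable, so the fold lemma lets me delete several vertices without changing the homotopy type. Concretely, after folding away the surviving column-$n$ vertices against suitable column-$(n-1)$ vertices (using $N(v)\subset N(w)$), I expect $I(X_n)$ to be homotopy equivalent to $I(X_n - S)$ for an appropriate vertex set $S$, and then to apply Theorem \ref{thm cofiber} at a well-chosen vertex $v$ so that the link $I(X_n-N[v])$ and the deletion $I(X_n-v)$ both reduce to shifted copies of smaller $X_m$'s or to contractible/spherical pieces. The disjoint-union and $K_2$-suspension facts ($I(K_2\sqcup G)\simeq\Sigma I(G)$, and $I(G)$ contractible when $G$ has an isolated vertex) will be used to collapse leftover components.

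The parity split in the statement should emerge naturally from this two-column reduction: removing two columns at a time sends $n\mapsto n-2$, so odd $n$ reduces down to the contractible base case $X_1$ while even $n$ reduces to $X_2$, and each reduction step in the even case contributes one suspension, accounting for the dimension shift $3k-1\mapsto 3(k+1)-1=3k+2$ as $k$ increases by one (i.e.\ three suspensions per two added columns, consistent with the three independent vertices gained). I would verify that each reduction step is exactly a triple suspension (or a single suspension applied three times via isolated-vertex/$K_2$ arguments) to pin down the exponent $3k-1$. The main obstacle I anticipate is bookkeeping the fold lemma applications cleanly: ensuring at each stage that there genuinely exists a pair $v\neq w$ with $N(v)\subset N(w)$ in the current subgraph, and that after folding, the residual graph is exactly a shifted $X_{n-2}$ (up to isolated vertices and disjoint $K_2$'s) rather than some more complicated subgraph. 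Getting the neighborhood inclusions to line up at the boundary columns, where the deletions of $(n,1),(n,3),(n,5)$ alter the local adjacency pattern, is the delicate point that the careful choice of folding order must resolve.
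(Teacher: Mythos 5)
Your proposal takes essentially the same route as the paper: the fold lemma alone gives $I(X_n)\simeq\Sigma^3 I(X_{n-2})$ for $n\ge 3$ (the folds delete the dominated vertices in columns $n-1$ and $n-2$, leaving a reflected copy of $X_{n-2}$ disjoint from three $K_2$'s, whence the triple suspension), together with the base cases $I(X_1)\simeq\mathrm{pt}$ and $I(X_2)\simeq S^2$, also obtained by folding. The only deviation is that the cofiber sequence of Theorem \ref{thm cofiber} is never needed for this lemma, and the vertices folded away are in columns $n-1$ and $n-2$ rather than column $n$ (the column-$n$ vertices must survive to form the $K_2$'s), exactly as your anticipated ``delicate point'' requires.
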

\begin{proof}
The fold lemma implies
\[ I(X_n) \simeq \Sigma^3 I(X_{n-2})\]
for $n \ge 3$ (see Figure \ref{figure X}). The fold lemma again implies
\[ I(X_1) \simeq {\rm pt} \quad \textrm{and} \quad I(X_2) \simeq S^2.\]
This completes the proof.
\end{proof}

\begin{figure}[t]
\begin{picture}(300,140)(0,-20)
\multiput(30,0)(0,20){6}{\circle*{3}}
\multiput(50,0)(0,20){6}{\circle*{3}}
\multiput(70,0)(0,20){6}{\circle*{3}}
\multiput(90,0)(0,20){6}{\circle*{3}}
\multiput(110,20)(0,40){3}{\circle*{3}}

\multiput(30,0)(20,0){4}{\line(0,1){100}}

\multiput(20,20)(0,40){3}{\line(1,0){90}}
\multiput(20,0)(0,40){3}{\line(1,0){70}}

\put(45,-20){$I(X_n)$}
\put(0,47){$\cdots$}

\multiput(190,0)(0,20){6}{\circle*{3}}
\multiput(210,0)(0,20){6}{\circle*{3}}
\multiput(230,0)(0,40){3}{\circle*{3}}
\multiput(230,20)(0,40){3}{\circle{3}}
\multiput(250,0)(0,40){3}{\circle{3}}
\multiput(250,20)(0,40){3}{\circle*{3}}
\multiput(270,20)(0,40){3}{\circle*{3}}

\multiput(190,0)(20,0){2}{\line(0,1){100}}
\multiput(180,0)(0,40){3}{\line(1,0){50}}
\multiput(180,20)(0,40){3}{\line(1,0){30}}
\multiput(250,20)(0,40){3}{\line(1,0){20}}

\put(200,-20){$\Sigma^3 I(X_{n-2})$}
\put(160,47){$\cdots$}
\put(140,47){$\simeq$}
\end{picture}
\caption{} \label{figure X}
\end{figure}

\begin{figure}[b]
\begin{picture}(300,140)(0,-20)
\multiput(30,0)(0,20){6}{\circle*{3}}
\multiput(50,0)(0,20){6}{\circle*{3}}
\multiput(70,0)(0,20){6}{\circle*{3}}
\multiput(90,0)(0,20){6}{\circle*{3}}
\multiput(110,0)(0,20){2}{\circle*{3}}
\multiput(110,80)(0,20){2}{\circle*{3}}

\multiput(20,0)(0,20){2}{\line(1,0){90}}
\multiput(20,80)(0,20){2}{\line(1,0){90}}
\multiput(20,40)(0,20){2}{\line(1,0){70}}
\multiput(30,0)(20,0){4}{\line(0,1){100}}
\multiput(110,0)(0,80){2}{\line(0,1){20}}

\put(45,-20){$I(Y_n)$}
\put(0,47){$\cdots$}

\multiput(190,0)(0,20){6}{\circle*{3}}
\multiput(210,0)(0,20){6}{\circle*{3}}
\multiput(230,0)(0,20){2}{\circle*{3}}
\multiput(230,40)(0,20){2}{\circle{3}}
\multiput(230,80)(0,20){2}{\circle*{3}}
\multiput(250,0)(0,20){2}{\circle{3}}
\multiput(250,40)(0,20){2}{\circle*{3}}
\multiput(250,80)(0,20){2}{\circle{3}}
\multiput(270,0)(0,20){2}{\circle*{3}}
\multiput(270,80)(0,20){2}{\circle*{3}}

\multiput(180,0)(0,20){2}{\line(1,0){50}}
\multiput(180,80)(0,20){2}{\line(1,0){50}}
\multiput(180,40)(0,20){2}{\line(1,0){30}}
\multiput(190,0)(20,0){2}{\line(0,1){100}}
\multiput(230,0)(0,80){2}{\line(0,1){20}}
\put(250,40){\line(0,1){20}}
\multiput(270,0)(0,80){2}{\line(0,1){20}}

\put(200,-20){$\Sigma^3 I(Y_{n-2})$}
\put(160,47){$\cdots$}
\put(140,47){$\simeq$}
\end{picture}
\caption{} \label{figure Y}
\end{figure}

\begin{lemma} \label{lemma Y}
For $n\ge 1$,
there is a following homotopy equivalence:
\[ I(Y_n) \simeq \begin{cases}
S^{3k+1} & (n = 2k+1) \\
S^{3k-1} & (n = 2k).
\end{cases} \]
\end{lemma}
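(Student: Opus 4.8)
The plan is to follow the argument of Lemma \ref{lemma X} in spirit. I would first establish the suspension recursion
\[ I(Y_n) \simeq \Sigma^3 I(Y_{n-2}) \qquad (n \ge 3), \]
then compute the two base cases $I(Y_1)$ and $I(Y_2)$, and finally read off the claimed formula by induction on $n$ within each parity class. Once the recursion and the base cases are in place, for odd $n = 2k+1$ one obtains $I(Y_n) \simeq \Sigma^{3k} I(Y_1)$ and for even $n = 2k$ one obtains $I(Y_n) \simeq \Sigma^{3(k-1)} I(Y_2)$, which give $S^{3k+1}$ and $S^{3k-1}$ respectively.

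For the recursion I would apply the fold lemma (Corollary \ref{cor fold lemma}) repeatedly to the two rightmost columns of $Y_n$, exactly as depicted in Figure \ref{figure Y}. At each step one deletes a vertex $w$ in column $n-1$ or $n$ after exhibiting a neighbor $v$ with $N(v) \subseteq N(w)$; the open circles in Figure \ref{figure Y} record the vertices removed in this way. After the full sequence of folds the graph reduces to $Y_{n-2} \sqcup K_2 \sqcup K_2 \sqcup K_2$, and since $I(G_1 \sqcup G_2) = I(G_1) * I(G_2)$ and $I(K_2) = S^0$, its independence complex is $\Sigma^3 I(Y_{n-2})$, as claimed.

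The base cases are direct. The graph $Y_1 = \Gamma_{1,6} - \{(1,3),(1,4)\}$ is the disjoint union of the edges $\{(1,1),(1,2)\}$ and $\{(1,5),(1,6)\}$, i.e. $K_2 \sqcup K_2$, so $I(Y_1) = S^0 * S^0 \simeq S^1 = S^{3\cdot 0 + 1}$. For $Y_2$ the outer vertices of the short column satisfy $N((1,1)) = N((2,2))$ and $N((1,6)) = N((2,5))$, so two folds delete $(2,2)$ and $(2,5)$ and leave the path $(2,1)-(1,1)-(1,2)-\cdots-(1,6)-(2,6)$ on eight vertices; by Kozlov's computation $I(P_8) \simeq S^2 = S^{3\cdot 1 - 1}$.

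I expect the only real difficulty to be bookkeeping rather than conceptual: one must pin down an explicit admissible order of folds on the last two columns and verify the containment hypothesis $N(v) \subseteq N(w)$ at every step. Because the deleted vertices $(n,3),(n,4)$ perturb the neighborhoods near the middle rows, the fold pattern cannot be copied verbatim from the $X_n$ case, and confirming that it still produces exactly three copies of $K_2$ (and hence exactly three suspensions, not two or four) is the step that requires the most care.
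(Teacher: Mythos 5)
Your proposal is correct and is essentially the paper's own proof: the paper likewise applies the fold lemma to obtain $I(Y_n)\simeq\Sigma^3 I(Y_{n-2})$ for $n\ge 3$ (Figure \ref{figure Y}) and then settles the base cases $I(Y_1)\simeq S^1$, $I(Y_2)\simeq S^2$, from which the formula follows by induction on parity. Two cosmetic slips only: the open-circled (deleted) vertices in Figure \ref{figure Y} lie in columns $n-2$ and $n-1$, not $n-1$ and $n$; and the auxiliary vertex $v$ in a fold is necessarily a \emph{non}-neighbor of $w$, since $N(v)\subset N(w)$ rules out adjacency --- your actual instances, such as $N((1,1))=N((2,2))$, do respect this.
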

\begin{proof}
The fold lemma implies
\[ I(Y_n) \simeq \Sigma^3 I(Y_{n-2})\]
for $n \ge 3$ (see Figure \ref{figure Y}). The fold lemma again implies
\[ I(Y_1) \simeq S^1 \quad \textrm{and} \quad I(Y_2) \simeq S^2.\]
This completes the proof.
\end{proof}

Set $v_n = (n, 3)$. Note that $A_n - v_n = X_n$ and $B_n - v_n = Y_n$ (see Definition \ref{definition subgraphs}).

\begin{lemma} \label{lemma A - B}
There are following homotopy equivalences:
\begin{enumerate}[$(1)$]
\item For $n \ge 4$, there is a following homotopy equivalence:
\[ I(A_n - N[v_n]) \simeq \Sigma^3 I(B_{n-3}).\]
\item For $n \ge 5$, there is a following homotopy equivalence:
\[ I(B_n - N[v_n]) \simeq \Sigma^5 I(A_{n-4}).\]
\end{enumerate}
\end{lemma}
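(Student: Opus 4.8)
The plan is to reduce each of the two independence complexes to its target by a sequence of elementary moves built from Theorem~\ref{thm cofiber} and the fold lemma (Corollary~\ref{cor fold lemma}). The basic tool is a \emph{degree-one reduction}: if a vertex $u$ of a graph $G$ has a unique neighbor $w$, then applying Theorem~\ref{thm cofiber} to $w$ makes $u$ an isolated vertex of $G-w$, so $I(G-w)$ is contractible; the inclusion $I(G-N[w])\to I(G-w)$ is thus null-homotopic, and Theorem~\ref{thm cofiber} collapses to $I(G)\simeq \Sigma\, I(G-N[w])$. Since each part of the lemma is the computation of a link $I(G-N[v_n])$, I will peel this link down one boundary column at a time, recording one suspension per degree-one reduction and none per fold.

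For part $(1)$, note that $N[v_n]=\{(n,2),(n,3),(n,4),(n-1,3)\}$, so $A_n-N[v_n]$ is the full grid on columns $1,\dots,n-2$, column $n-1$ with $(n-1,3)$ deleted, and column $n$ reduced to the single vertex $(n,6)$. Now $(n,6)$ is a leaf with neighbor $(n-1,6)$, so a degree-one reduction removing $N[(n-1,6)]$ gives one $\Sigma$; in the new graph $(n-1,4)$ has become a leaf with neighbor $(n-2,4)$, and removing $N[(n-2,4)]$ gives a second $\Sigma$. At this point columns $n-1,n-2$ have collapsed to $\{(n-1,1),(n-1,2)\}$ and $\{(n-2,1),(n-2,2)\}$ and column $n-3$ has lost $(n-3,4)$. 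One checks $N((n-1,1))=\{(n-1,2),(n-2,1)\}\subseteq N((n-2,2))$, so the fold lemma deletes $(n-2,2)$ for free; this makes $(n-1,2)$ a leaf with neighbor $(n-1,1)$, and a third degree-one reduction removing $N[(n-1,1)]$ empties columns $n-1,n-2$ and leaves exactly $\Gamma_{n-3}$ with $(n-3,4)$ deleted, i.e.\ $B_{n-3}$. The three suspensions give $I(A_n-N[v_n])\simeq \Sigma^3 I(B_{n-3})$, valid once $n\ge 4$ so that column $n-3$ exists.

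Part $(2)$ is the same strategy with a longer sequence. Here $B_n-N[v_n]$ is the full grid on columns $1,\dots,n-2$, column $n-1$ missing $(n-1,3)$, and column $n$ equal to $\{(n,1),(n,5),(n,6)\}$. The leaf $(n,1)$ (neighbor $(n-1,1)$) gives a degree-one reduction; then $N((n,5))\subseteq N((n-1,6))$ lets the fold lemma delete $(n-1,6)$; afterwards $(n,6)$ is a leaf (neighbor $(n,5)$), then $(n-1,4)$ a leaf (neighbor $(n-2,4)$), then $(n-2,2)$ a leaf (neighbor $(n-3,2)$), then $(n-2,6)$ a leaf (neighbor $(n-3,6)$) --- five degree-one reductions in all. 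The remaining graph is $\Gamma_{n-4}$ with $(n-4,2)$ and $(n-4,6)$ deleted. Applying the reflection automorphism $(x,y)\mapsto (x,7-y)$ of $\Gamma_{n-4}$ identifies this with $A_{n-4}=\Gamma_{n-4}-\{(n-4,1),(n-4,5)\}$, giving $I(B_n-N[v_n])\simeq \Sigma^5 I(A_{n-4})$, valid once $n\ge 5$.

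The routine but delicate part --- and the only place errors can creep in --- is the column-by-column bookkeeping: at each step one must verify that the chosen vertex really is a leaf in the current graph (equivalently, that all its grid-neighbors but one have already been deleted) and that each folded vertex genuinely dominates its partner. Drawing the boundary columns after every move, exactly as in Figures~\ref{figure X} and~\ref{figure Y}, makes each verification immediate; finally the extreme cases $n=4$ (resp.\ $n=5$) should be checked directly to confirm that the reductions terminate at $B_1$ (resp.\ $A_1$).
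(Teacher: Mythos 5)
Your proof is correct and is essentially the paper's own argument: the paper's proof consists of the boundary-column reductions depicted in Figures \ref{figure A - N[v]} and \ref{figure B - N[v]}, where the fold lemma deletes exactly the vertices you delete, leaving $B_{n-3}$ (resp.\ the vertical reflection of $A_{n-4}$) together with three (resp.\ five) isolated copies of $K_2$, each contributing one suspension via $I(K_2\sqcup G)\simeq \Sigma I(G)$. Your ``degree-one reductions'' are just the cofiber-sequence repackaging of those isolated-edge joins (folding away the other neighbors of $w$ leaves the edge $\{u,w\}$ as a $K_2$ component), so the two arguments match move for move, including the final use of the reflection $(x,y)\mapsto(x,7-y)$ in part $(2)$.
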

\begin{proof}
These homotopy equivalences are immediately deduced by the fold lemma. See Figures \ref{figure A - N[v]} and \ref{figure B - N[v]}.
\end{proof}

\begin{lemma} \label{lemma small AB}
The following hold:
\[ I(A_1) \simeq {\rm pt}, I(A_2) \simeq S^2, I(A_3) \simeq S^3,\]
\[ I(B_1) \simeq S^1, I(B_2) \simeq S^2, I(B_3) \simeq S^4, I(B_4) \simeq S^5 \vee S^5.\]
\end{lemma}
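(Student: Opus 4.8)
The plan is to compute all seven homotopy types \emph{directly}, by repeatedly applying the fold lemma (Corollary \ref{cor fold lemma}) together with the cofiber sequence of Theorem \ref{thm cofiber}. No recursion is available here: Lemma \ref{lemma A - B} requires $n\ge 4$ (resp.\ $n\ge 5$), and these graphs are precisely the base cases that seed the mutual recursion between the $A_n$ and the $B_n$. All of them are induced subgraphs of a $(4\times 6)$-grid, so each computation is a finite, if lengthy, reduction. The two moves I would use constantly are: if an induced subgraph acquires an isolated vertex, then its independence complex is contractible; and if $G-v$ has an isolated vertex — equivalently, some vertex is a leaf whose unique neighbor is $v$ — then $I(G-v)\simeq \pt$, so the cofiber sequence $I(G-N[v])\to I(G-v)\to I(G)$ collapses to a homotopy equivalence $I(G)\simeq \Sigma I(G-N[v])$. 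When no leaf is present I would first use the fold lemma to delete a dominated vertex (one whose neighborhood contains that of another vertex), which typically creates such a leaf.

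With these moves the single-sphere cases go quickly. For $I(A_1)$ the vertex $(1,6)$ is already isolated, so $I(A_1)\simeq \pt$. For $I(A_2)$, $I(A_3)$, $I(B_1)$, $I(B_2)$, $I(B_3)$ I would peel off one corner at a time via the suspension move (after a fold when necessary), each deletion contributing a single suspension, until the remaining graph is a disjoint union of short paths whose independence complex is recognizable: here $I(P_2)=I(K_2)=S^0$, $I(P_3)\simeq S^0$, and $I(G_1\sqcup G_2)=I(G_1)*I(G_2)$. For example $B_1=P_3\sqcup P_2$, so $I(B_1)=S^0*S^0\simeq S^1$. The towers for the remaining cases terminate at $S^0$ and yield exactly the spheres $S^2$, $S^3$, $S^2$, $S^4$ recorded in the statement; the only bookkeeping is counting how many suspensions each deletion introduces.

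The genuinely different case, and the one I expect to be the main obstacle, is $I(B_4)\simeq S^5\vee S^5$: a wedge of two spheres of the \emph{same} dimension cannot be produced by the suspension move alone, which always turns a single sphere into a single sphere. To obtain the wedge I would instead apply the splitting form of Theorem \ref{thm cofiber} at a carefully chosen vertex $v$ of a fold-reduction of $B_4$, arranged so that $I(B_4-v)\simeq S^5$ while $I(B_4-N[v])\simeq S^4$; the theorem then gives $I(B_4)\simeq I(B_4-v)\vee \Sigma I(B_4-N[v])\simeq S^5\vee S^5$, \emph{provided} the inclusion $I(B_4-N[v])\hookrightarrow I(B_4-v)$ is null-homotopic. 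Verifying this null-homotopy is the crux, and the clean way to do it is to exhibit a vertex $u\ne v$ with $N(u)\subseteq N[v]$: then every face of $I(B_4-N[v])$ remains independent after adjoining $u$, so the image lies in the contractible star $\st(u)\subseteq I(B_4-v)$, forcing the inclusion to be null-homotopic. Finding the right pair $(v,u)$ after the preliminary folds is the delicate part, since one must simultaneously identify $I(B_4-v)$ and $I(B_4-N[v])$ as $5$- and $4$-spheres.

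Finally, since every graph here has at most $23$ vertices, I would cross-check all seven identifications by a direct (or machine-assisted) computation of the reduced homology; once one knows a priori from the reductions above that each complex is a wedge of spheres, the Betti numbers determine the homotopy type and confirm in particular the two top classes of $I(B_4)$.
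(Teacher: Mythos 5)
Your overall skeleton matches the paper's proof: the six single-sphere cases are done by fold-lemma (and elementary suspension/join) reductions, and $I(B_4)$ is handled by the splitting of Theorem \ref{thm cofiber} at a vertex, using $I(B_4-v_4)=I(Y_4)\simeq S^5$ and $I(B_4-N[v_4])\simeq S^4$, exactly as in the paper. The gap is at the step you yourself single out as the crux: proving that the inclusion $I(B_4-N[v_4])\hookrightarrow I(B_4-v_4)$ is null-homotopic. Your proposed certificate --- a vertex $u\ne v$ with $N(u)\subseteq N[v]$, so that $I(B_4-N[v])$ lands inside the contractible star of $u$ --- is never exhibited, and for the natural choice $v=v_4=(4,3)$ no such $u$ exists in $B_4$: there $N[v_4]=\{(4,2),(4,3),(3,3)\}$, and every vertex $u\ne v_4$ has a neighbor outside this set (the only candidates are $(4,1)$, $(3,2)$, $(2,3)$, $(3,4)$, $(4,2)$, $(3,3)$, and each of them is adjacent to some vertex such as $(3,1)$, $(2,2)$, $(2,4)$, or $(3,5)$ that lies outside $N[v_4]$). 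Whether some sequence of preliminary folds produces a usable pair $(v,u)$ is precisely what you leave open, so as written the seventh equivalence $I(B_4)\simeq S^5\vee S^5$ is not proved.

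The missing idea is that no combinatorial witness is needed: once the two ends are identified, the null-homotopy is automatic for dimension reasons. The inclusion is a map from a complex homotopy equivalent to $S^4$ into a complex homotopy equivalent to $S^5$; since $S^5$ is $4$-connected and simply connected (so free and based homotopy classes agree and $\pi_4(S^5)=0$), every such map is null-homotopic, and Theorem \ref{thm cofiber} gives $I(B_4)\simeq S^5\vee\Sigma S^4=S^5\vee S^5$. This ``dimension of the source is below the connectivity of the target'' argument is exactly the device the paper relies on, implicitly here and explicitly several times in the proof of Proposition \ref{proposition key}, so with this one observation your argument closes and becomes the paper's proof. Note also that your final fallback --- confirming the answer by a homology computation --- is circular at $B_4$ as stated: you are only entitled to ``it is a wedge of spheres, so Betti numbers determine the homotopy type'' after the null-homotopy (or the connectivity argument above) has been established.
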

\begin{proof}
These homotopy equivalences clearly follow from the fold lemma except for $I(B_4) \simeq S^5 \vee S^5$. This last homotopy equivalence is deduced from Theorem \ref{thm cofiber},
\[ I(B_4 - v_4) \simeq S^5, \quad \textrm{and} \quad I(B_4 - N[v_4]) \simeq S^4.\]
These two homotopy equivalences are deduced from the fold lemma.
\end{proof}

\begin{figure}[t]
\begin{picture}(300,140)(0,-20)
\multiput(30,0)(0,20){6}{\circle*{3}}
\multiput(50,0)(0,20){6}{\circle*{3}}
\multiput(70,0)(0,20){6}{\circle*{3}}
\multiput(90,0)(0,20){2}{\circle*{3}}
\multiput(90,60)(0,20){3}{\circle*{3}}
\put(110,100){\circle*{3}}
\put(90,40){\circle{3}}
\multiput(110,20)(0,20){3}{\circle{3}}

\multiput(30,0)(20,0){3}{\line(0,1){100}}
\multiput(20,0)(0,20){2}{\line(1,0){70}}
\multiput(20,60)(0,20){2}{\line(1,0){70}}
\put(20,40){\line(1,0){50}}
\put(20,100){\line(1,0){90}}
\put(90,0){\line(0,1){20}}
\put(90,60){\line(0,1){40}}

\put(30,-20){$I(A_n - N[v_n])$}
\put(0,47){$\cdots$}
\put(115,37){$v_n$}

\multiput(190,0)(0,20){6}{\circle*{3}}
\multiput(210,0)(0,20){3}{\circle*{3}}
\multiput(210,80)(0,20){2}{\circle*{3}}
\multiput(250,0)(0,20){2}{\circle*{3}}
\multiput(230,60)(20,0){2}{\circle*{3}}
\multiput(250,100)(20,0){2}{\circle*{3}}

\put(210,60){\circle{3}}
\multiput(230,0)(0,20){3}{\circle{3}}
\multiput(230,80)(0,20){2}{\circle{3}}
\multiput(180,0)(0,20){3}{\line(1,0){30}}
\multiput(180,80)(0,20){2}{\line(1,0){30}}

\put(250,80){\circle{3}}

\put(180,60){\line(1,0){10}}
\put(190,0){\line(0,1){100}}
\put(210,0){\line(0,1){40}}
\put(210,80){\line(0,1){20}}
\put(230,60){\line(1,0){20}}
\put(250,0){\line(0,1){20}}
\put(250,100){\line(1,0){20}}

\put(215,-20){$\Sigma^3 I(B_{n-3})$}
\put(160,47){$\cdots$}
\put(140,47){$\simeq$}
\end{picture}
\caption{} \label{figure A - N[v]}
\end{figure}

The following proposition is a key to the whole proof. This proposition allows us to determine the homotopy types of $I(A_n)$ and $I(B_n)$ inductively.

\begin{proposition} \label{proposition key}
The following hold:
\begin{enumerate}[$(1)$]
\item If $n = 2k + 1$, then $I(A_n)$ is homotopy equivalent to a wedge of spheres whose dimension is at most $3k$. If $n = 2k$, then $I(A_n)$ is homotopy equivalent to a wedge of spheres whose dimension is at most $3k-1$.

\item For $n \ge 4$, the inclusion $I(A_n - N[v_n]) \hookrightarrow I(A_n - v_n)$ is null-homotopic. In particular, we have
\[ I(A_n) \simeq I(X_n) \vee \Sigma^4 I(B_{n-3}).\]

\item If $n = 2k + 1$, then $I(B_n)$ is homotopy equivalent to a wedge of spheres whose dimension is at most $3k+1$. If $n = 2k$, then $I(B_n)$ is homotopy equivalent to a wedge of spheres whose dimension is at most $3k-1$.

\item For $n \ge 5$, the inclusion $I(B_n - N[v_n]) \hookrightarrow I(B_n - v_n)$ is null-homotopic. In particular, we have
\[ I(B_n) \simeq I(Y_n) \vee \Sigma^6 I(A_{n-4}).\]
\end{enumerate}
\end{proposition}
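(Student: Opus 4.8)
The plan is to prove all four statements simultaneously by strong induction on $n$, exploiting the circular-looking but well-founded dependency: part (2) at $n$ uses the dimension bound from part (3) at the smaller index $n-3$, part (1) at $n$ then follows from (2); symmetrically, part (4) at $n$ uses part (1) at $n-4$, and part (3) follows from (4). The base cases are supplied by Lemma \ref{lemma small AB}: one checks directly that $I(A_1)\simeq\pt$, $I(A_2)\simeq S^2$, $I(A_3)\simeq S^3$ meet the bounds in (1), and that $I(B_1),\dots,I(B_4)$ meet those in (3). In the inductive step I would first establish (2) at $n$ (invoking (3) at $n-3<n$), deduce (1) at $n$, then establish (4) at $n$ (invoking (1) at $n-4<n$), and deduce (3) at $n$.

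The heart of the argument is the null-homotopy assertions in (2) and (4), and I would derive both from a single homotopy-theoretic principle: \emph{a map from a CW complex of dimension at most $d$ into a $d$-connected space is null-homotopic} (a wedge of spheres each of dimension $\ge d+1$ is $d$-connected, and the obstructions to contracting such a map live in the vanishing groups $\pi_i$ for $i\le d$). For (2), Lemma \ref{lemma A - B}(1) identifies the source $I(A_n-N[v_n])$ with $\Sigma^3 I(B_{n-3})$; writing $n=2k$ or $n=2k+1$ and applying the inductive bound of (3) to $I(B_{n-3})$ shows this source is a wedge of spheres of dimension at most $3k-2$ (and contractible when $n$ is odd). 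The target $I(A_n-v_n)=I(X_n)$ is, by Lemma \ref{lemma X}, either $\pt$ or $S^{3k-1}$, hence contractible or $(3k-2)$-connected, so the inclusion is null-homotopic and Theorem \ref{thm cofiber} gives $I(A_n)\simeq I(X_n)\vee\Sigma^4 I(B_{n-3})$. The case (4) is identical in shape: Lemma \ref{lemma A - B}(2) gives source $\Sigma^5 I(A_{n-4})$, the inductive bound of (1) keeps its top dimension strictly below that of the sphere $I(Y_n)$ furnished by Lemma \ref{lemma Y}, and Theorem \ref{thm cofiber} yields $I(B_n)\simeq I(Y_n)\vee\Sigma^6 I(A_{n-4})$.

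With the decompositions in hand, parts (1) and (3) reduce to bookkeeping. For example, when $n=2k+1$ in (1), $I(X_n)\simeq\pt$ and the inductive bound on $I(B_{n-3})$ (here $n-3=2(k-1)$ is even, so its dimension is at most $3k-4$) give $I(A_n)\simeq\Sigma^4 I(B_{n-3})$, a wedge of spheres of dimension at most $3k$; the even case and both cases of (3) are handled the same way, with the single sphere $I(X_n)$ or $I(Y_n)$ realizing the extreme dimension in one parity but never exceeding the stated bound. I expect the genuine content to be concentrated entirely in the null-homotopy step: the one delicate point is that the inductive bounds must force the top dimension of each source to lie strictly below the dimension of the target sphere in \emph{every} parity case, so I would keep careful track of the parities of $n-3$ and $n-4$ and of the suspension shifts coming from Lemma \ref{lemma A - B}. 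Everything else — the arithmetic of the dimension bounds and the verification of the base cases — is routine.
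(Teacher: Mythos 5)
Your proposal is correct and follows essentially the same path as the paper's proof: a simultaneous induction in which (2) at $n$ is deduced from the bound in (3) at $n-3$ and then yields (1), while (4) at $n$ is deduced from (1) at $n-4$ and yields (3), with the null-homotopies obtained exactly as in the paper from the fact that a map from a complex of dimension at most $d$ into a $d$-connected (or contractible) target is null-homotopic. One cosmetic slip: for odd $n=2k+1$ the source $\Sigma^3 I(B_{n-3})$ is a wedge of spheres of dimension at most $3k-1$, not $3k-2$, but this is harmless since in that parity the target $I(X_n)$ is contractible, as you yourself note.
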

\begin{proof}
We simultaneously show these four statements by induction on $n$. The case $n \le 3$ immediately follows from Lemma \ref{lemma small AB}. Suppose $n \ge 4$. We note that in the statement of (2) (or (4)), Theorem \ref{thm cofiber} and Lemma \ref{lemma A - B} imply that the former assertion implies the latter.

We now show (1) and (2). Suppose that $n$ is odd, and set $n = 2k + 1$. Then Lemma \ref{lemma A - B} implies that
\[ I(A_{2k+1} - v_{2k+1}) = I(X_{2k+1}) \simeq {\rm pt}.\]
Hence (2) is clear and $I(A_{2k+1}) \simeq \Sigma I(A_{2k+1} - N[v_{2k+1}]) \simeq \Sigma^4 I(B_{2k-2})$. Then, by the inductive hypothesis, $I(B_{2k-2})$ is homotopy equivalent to a wedge of spheres whose dimension is at most $3k -4$. This implies (1) in this case.

Next suppose that $n$ is even, and set $n = 2k$. Lemmas \ref{lemma X} and \ref{lemma A - B} imply
\[ I(A_{2k} - v_{2k}) = I(X_{2k}) \simeq S^{3k-1} \quad \textrm{and} \quad I(A_{2k} - N[v_{2k}]) \simeq \Sigma^3 I(B_{2k-3}).\]
By the inductive hypothesis, $I(B_{2k-3})$ is homotopy equivalent to a wedge of spheres whose dimension is at most $3 (k-2) + 1 = 3k - 5$. Hence the inclusion $\Sigma^3 I(B_{2k - 3}) \to I(X_{2k})$ is null-homotopic. This implies (2) and
\[ I(A_{2k}) \simeq I(X_{2k}) \vee \Sigma^4 I(B_{2k-3}).\]
This means $I(A_{2k})$ is homotopy equivalent to a wedge of spheres whose dimension is at most $3k -1$. This completes the proof of (1).

Next we show (3) and (4). Suppose that $n$ is odd, and set $n = 2k+1$. Then Lemmas \ref{lemma Y} and \ref{lemma A - B} imply
\[ I(B_{2k+1} - v_{2k+1}) = I(Y_{2k+1}) \simeq S^{3k+1} \quad \textrm{and} \quad I(B_{2k+1} - N[v_{2k+1}]) \simeq \Sigma^5 I(A_{2k-3}).\]
By the inductive hypothesis, $I(A_{2k-3}) = I(A_{2(k-2) + 1})$ is homotopy equivalent to a wedge of spheres whose dimension is at most $3(k-2) = 3k - 6$. Thus the inclusion $\Sigma^5 I(A_{2k-3}) \to I(X_{2k+1})$ is null-homotopic, which implies (4), and
\[ I(B_{2k+1}) \simeq I(Y_{2k+1}) \vee \Sigma^6 I(A_{2k-3}).\]
This implies that $I(B_{2k+1})$ is homotopy equivalent to a wedge of spheres whose dimension is at most $3k+1$, which implies (3).

Finally suppose that $n$ is even, and set $n = 2k$. We have already showed the case that $k = 2$ in the previous lemma and its proof. Suppose $k \ge 3$. Then Lemmas \ref{lemma Y} and \ref{lemma A - B} imply
\[ I(B_{2k} - v_{2k}) = I(Y_{2k}) \simeq S^{3k-1} \quad \textrm{and} \quad I(B_{2k} - N[v_{2k}]) \simeq \Sigma^5 I(A_{2k - 4}).\]
Then $I(A_{2k - 4}) = I(A_{2(k-2)})$ is homotopy equivalent to a wedge of spheres whose dimension is at most $3(k-2) - 1 = 3k - 7$. This implies that the inclusion $\Sigma^5 I(A_{2k-4}) \to I(Y_{2k})$ is null-homotopic, which implies (4), and there is a homotopy equivalence
\[ I(B_{2k}) \simeq I(X_{2k}) \vee \Sigma^6 I(A_{2k - 4}).\]
This implies that $I(B_{2k})$ is homotopy equivalent to a wedge of spheres whose dimension is at most $3k-1$. This completes the proof.
\end{proof}

\begin{figure}
\begin{picture}(350,140)(0,-20)
\multiput(30,0)(0,20){6}{\circle*{3}}
\multiput(50,0)(0,20){6}{\circle*{3}}
\multiput(70,0)(0,20){6}{\circle*{3}}
\multiput(90,0)(0,20){6}{\circle*{3}}
\multiput(110,0)(0,20){2}{\circle*{3}}
\multiput(110,60)(0,20){3}{\circle*{3}}
\multiput(130,80)(0,20){2}{\circle*{3}}
\put(130,0){\circle*{3}}

\put(110,40){\circle{3}}
\multiput(130,20)(0,20){2}{\circle{3}}

\multiput(30,0)(20,0){4}{\line(0,1){100}}
\put(20,0){\line(1,0){110}}
\multiput(20,80)(0,20){2}{\line(1,0){110}}
\multiput(20,20)(0,40){2}{\line(1,0){90}}
\multiput(110,0)(20,80){2}{\line(0,1){20}}
\put(110,60){\line(0,1){40}}
\put(20,40){\line(1,0){70}}

\put(135,37){$v_n$}
\put(0,47){$\cdots$}
\put(40,-20){$I(B_n - N[v_n])$}

\multiput(210,0)(0,20){6}{\circle*{3}}
\put(230,0){\circle*{3}}
\multiput(230,40)(0,20){3}{\circle*{3}}
\multiput(250,20)(20,0){2}{\circle*{3}}
\multiput(250,100)(20,0){2}{\circle*{3}}
\multiput(270,60)(20,0){2}{\circle*{3}}
\multiput(290,0)(20,0){2}{\circle*{3}}
\multiput(310,80)(0,20){2}{\circle*{3}}

\multiput(230,20)(0,80){2}{\circle{3}}
\multiput(250,0)(20,0){2}{\circle{3}}
\multiput(250,40)(0,20){3}{\circle{3}}
\multiput(270,40)(0,40){2}{\circle{3}}
\put(290,20){\circle{3}}
\multiput(290,80)(0,20){2}{\circle{3}}

\put(210,0){\line(0,1){100}}
\put(210,40){\line(0,1){40}}
\put(230,40){\line(0,1){40}}
\multiput(200,0)(0,40){3}{\line(1,0){30}}
\put(200,60){\line(1,0){30}}
\multiput(200,20)(0,80){2}{\line(1,0){10}}
\multiput(250,20)(0,80){2}{\line(1,0){20}}
\put(270,60){\line(1,0){20}}
\put(290,0){\line(1,0){20}}
\put(310,80){\line(0,1){20}}

\put(160,47){$\simeq$}
\put(180,47){$\cdots$}
\put(230,-20){$\Sigma^5 I(A_{n-4})$}

\end{picture}
\caption{} \label{figure B - N[v]}
\end{figure}

Combining (2) and (4) of Proposition \ref{proposition key}, we have the following.

\begin{corollary} \label{corollary key}
For $n \ge 8$, there is a following homotopy equivalence:
\[ I(A_n) \simeq I(X_n) \vee \Sigma^4 I(Y_{n-3}) \vee \Sigma^{10} I(A_{n-7}).\]
\end{corollary}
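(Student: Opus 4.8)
The plan is to chain together parts (2) and (4) of Proposition \ref{proposition key}, which are both at my disposal once $n$ is large enough, and then simplify using only the formal properties that suspension commutes with wedges and that iterated suspensions add. No new geometric input is needed; this is a purely formal consequence of the two decompositions already established.

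First I would invoke part (2) of Proposition \ref{proposition key}. Since $n \ge 8 \ge 4$, that proposition gives the homotopy equivalence
\[
I(A_n) \simeq I(X_n) \vee \Sigma^4 I(B_{n-3}).
\]
Next I would rewrite the factor $I(B_{n-3})$ using part (4) of the same proposition. That statement requires its index to be at least $5$, i.e.\ $n-3 \ge 5$, which is exactly where the hypothesis $n \ge 8$ enters. Under this condition,
\[
I(B_{n-3}) \simeq I(Y_{n-3}) \vee \Sigma^6 I(A_{(n-3)-4}) = I(Y_{n-3}) \vee \Sigma^6 I(A_{n-7}).
\]

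Finally I would substitute this into the first equivalence and distribute the suspension. Using that $\Sigma^4$ preserves wedge sums, $\Sigma^4(U \vee V) \simeq \Sigma^4 U \vee \Sigma^4 V$, together with $\Sigma^4 \Sigma^6 \simeq \Sigma^{10}$, I obtain
\[
I(A_n) \simeq I(X_n) \vee \Sigma^4 I(Y_{n-3}) \vee \Sigma^{10} I(A_{n-7}),
\]
which is the desired conclusion.

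I do not expect any genuine obstacle here; the only point requiring care is the bookkeeping of index ranges, namely verifying that $n \ge 8$ is precisely what is needed to license both applications (part (2) needs $n \ge 4$ and part (4), applied to $B_{n-3}$, needs $n \ge 8$). Everything else is the standard formal manipulation of suspensions and wedges.
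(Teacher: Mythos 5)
Your proof is correct and is exactly the paper's argument: the paper derives this corollary by ``combining (2) and (4) of Proposition \ref{proposition key}'', which is precisely your chain of applying (2) to $A_n$, then (4) to $B_{n-3}$ (using $n-3\ge 5$, i.e.\ $n\ge 8$), and distributing $\Sigma^4$ over the wedge. The index bookkeeping you flag is the only substantive point, and you handle it correctly.
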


Combining Lemma \ref{lemma small AB} and (2) of Proposition \ref{proposition key}, we have the following.

\begin{corollary} \label{corollary small A}
There are following homotopy equivalences:
\[ I(A_1) \simeq {\rm pt}, I(A_2) \simeq S^2, I(A_3) \simeq S^3, I(A_4) \simeq S^5 \vee S^5,\]
\[ I(A_5) \simeq S^6, I(A_6) \simeq S^8 \vee S^8, I(A_7) \simeq S^9 \vee S^9.\]
\end{corollary}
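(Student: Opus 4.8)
The plan is to split the seven assertions into the three smallest cases, which are handed to us directly, and the four cases $4\le n\le 7$, which follow from a single formula. For $n=1,2,3$ there is nothing to do: the equivalences $I(A_1)\simeq\mathrm{pt}$, $I(A_2)\simeq S^2$, and $I(A_3)\simeq S^3$ are literally the first three statements of Lemma \ref{lemma small AB}, and the hypothesis $n\ge 4$ of Proposition \ref{proposition key}(2) does not permit (nor require) its use here.

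For $4\le n\le 7$ the approach is to apply part (2) of Proposition \ref{proposition key}, which is legitimate precisely because $n\ge 4$, to obtain the splitting $I(A_n)\simeq I(X_n)\vee\Sigma^4 I(B_{n-3})$, and then to substitute the homotopy types already computed in Lemma \ref{lemma X} (for the $X_n$ factor) and in Lemma \ref{lemma small AB} (for the $B_{n-3}$ factor, since $1\le n-3\le 4$). Carrying this out case by case: for $n=4$ we have $I(X_4)\simeq S^5$ and $\Sigma^4 I(B_1)\simeq\Sigma^4 S^1=S^5$, giving $S^5\vee S^5$; for $n=5$ we have $I(X_5)\simeq\mathrm{pt}$ and $\Sigma^4 I(B_2)\simeq\Sigma^4 S^2=S^6$, giving $S^6$; for $n=6$ we have $I(X_6)\simeq S^8$ and $\Sigma^4 I(B_3)\simeq\Sigma^4 S^4=S^8$, giving $S^8\vee S^8$; and for $n=7$ we have $I(X_7)\simeq\mathrm{pt}$ and $\Sigma^4 I(B_4)\simeq\Sigma^4(S^5\vee S^5)=S^9\vee S^9$, giving $S^9\vee S^9$. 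In the two odd cases the $\mathrm{pt}$ factor is absorbed (a wedge with a contractible space changes nothing), and in the two even cases the two summands happen to land in the same dimension, producing the stated two-fold wedges.

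Since every ingredient is already established, this is essentially a bookkeeping argument rather than a genuinely new proof, and there is no substantial obstacle. The only point demanding care is the index arithmetic: one must correctly read off the parity of $n$ when applying Lemma \ref{lemma X} (so that $I(X_n)$ is $\mathrm{pt}$ for $n$ odd and a sphere of dimension $3k-1$ for $n=2k$), and then track the fourfold suspension $\Sigma^4$ so that the dimension shift $\dim S^{d}\mapsto\dim S^{d+4}$ is applied to the correct $B_{n-3}$ value. Getting these two shifts right is exactly what makes the $X_n$ and $\Sigma^4 I(B_{n-3})$ summands coincide in dimension for $n=4$ and $n=6$, which is the content of the wedge structure.
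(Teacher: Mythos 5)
Your proposal is correct and is exactly the paper's argument: the cases $n\le 3$ are quoted from Lemma \ref{lemma small AB}, and the cases $4\le n\le 7$ follow by combining Proposition \ref{proposition key}(2) with Lemma \ref{lemma X} and the $I(B_{n-3})$ values from Lemma \ref{lemma small AB}, with the same dimension bookkeeping. The paper simply states this combination without writing out the case-by-case computation, which your proposal supplies accurately.
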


In the rest of this subsection, we prove the following proposition.

\begin{figure}
\begin{picture}(350,140)(0,-20)
\multiput(30,0)(0,20){6}{\circle*{3}}
\multiput(50,0)(0,20){6}{\circle*{3}}
\multiput(70,0)(0,20){6}{\circle*{3}}
\multiput(90,0)(0,20){6}{\circle*{3}}
\multiput(110,0)(0,20){3}{\circle*{3}}
\multiput(110,80)(0,20){2}{\circle*{3}}
\multiput(130,0)(0,20){2}{\circle*{3}}
\put(130,100){\circle*{3}}

\put(110,60){\circle{3}}
\multiput(130,40)(0,20){3}{\circle{3}}

\multiput(30,0)(20,0){4}{\line(0,1){100}}
\multiput(20,0)(0,20){2}{\line(1,0){110}}
\multiput(20,40)(0,40){2}{\line(1,0){90}}
\put(20,60){\line(1,0){70}}
\put(20,100){\line(1,0){110}}
\put(110,0){\line(0,1){40}}
\put(130,0){\line(0,1){20}}
\put(110,80){\line(0,1){20}}

\put(137,57){$w_n$}
\put(0,47){$\cdots$}
\put(40,-20){$I(\Gamma_n - N[w_n])$}

\multiput(210,0)(0,20){6}{\circle*{3}}
\multiput(230,0)(0,20){6}{\circle*{3}}
\multiput(250,0)(0,20){6}{\circle*{3}}
\multiput(270,0)(0,20){6}{\circle*{3}}
\multiput(290,0)(0,20){2}{\circle*{3}}
\multiput(290,60)(0,20){3}{\circle*{3}}
\multiput(310,80)(0,20){2}{\circle*{3}}
\put(310,0){\circle*{3}}

\put(290,40){\circle{3}}
\multiput(310,20)(0,20){3}{\circle{3}}

\multiput(210,0)(20,0){4}{\line(0,1){100}}
\put(200,0){\line(1,0){110}}
\multiput(200,80)(0,20){2}{\line(1,0){110}}
\multiput(200,20)(0,40){2}{\line(1,0){90}}
\multiput(290,0)(20,80){2}{\line(0,1){20}}
\put(290,60){\line(0,1){40}}
\put(200,40){\line(1,0){70}}

\put(315,37){$v_n$}
\put(180,47){$\cdots$}
\put(220,-20){$I(\Gamma_n - N[v_n])$}

\put(160,47){$\cong$}
\put(180,47){$\cdots$}

\end{picture}
\caption{} \label{figure B - N[w] = B - N[v]}
\end{figure}

\begin{proposition} \label{proposition key 2}
For $n \ge 5$, we have the following homotopy equivalence:
\[ I(\Gamma_n) \simeq I(Y_n) \vee \bigvee_2 \Sigma^6 I(A_{n-4}).\]
\end{proposition}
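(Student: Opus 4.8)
The plan is to apply the cofiber sequence of Theorem~\ref{thm cofiber} to $G=\Gamma_n$ at the vertex $w_n=(n,4)$. By Definition~\ref{definition subgraphs} we have $\Gamma_n-w_n=B_n$, so the cofiber sequence reads
\[ I(\Gamma_n-N[w_n])\to I(B_n)\to I(\Gamma_n). \]
If I can show that the inclusion $\iota\colon I(\Gamma_n-N[w_n])\hookrightarrow I(B_n)$ is null-homotopic, then Theorem~\ref{thm cofiber} gives $I(\Gamma_n)\simeq I(B_n)\vee\Sigma I(\Gamma_n-N[w_n])$, and the proposition will follow once both wedge summands are identified. For the second summand I would use the reflection automorphism $\rho$ of $\Gamma_n$ defined by $(x,y)\mapsto(x,7-y)$. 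Since $\rho$ interchanges $w_n=(n,4)$ and $v_n=(n,3)$ and carries $N[w_n]$ onto $N[v_n]$ (this is the content of Figure~\ref{figure B - N[w] = B - N[v]}), it restricts to an isomorphism $\Gamma_n-N[w_n]\cong\Gamma_n-N[v_n]$. Moreover $\Gamma_n-N[v_n]=B_n-N[v_n]$, because the only neighbour of $v_n$ lost when passing from $\Gamma_n$ to $B_n$ is $(n,4)$, which already lies in $N[v_n]$. Hence Lemma~\ref{lemma A - B}(2) yields $I(\Gamma_n-N[w_n])\cong I(B_n-N[v_n])\simeq\Sigma^5 I(A_{n-4})$, so $\Sigma I(\Gamma_n-N[w_n])\simeq\Sigma^6 I(A_{n-4})$.

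The hard part will be the null-homotopy of $\iota$, and I would establish it not by a connectivity estimate but by factoring $\iota$ through a map whose behaviour is already known. Comparing vertex sets, $\Gamma_n-N[w_n]$ is an induced subgraph of $Y_n=\Gamma_n-\{(n,3),(n,4)\}$, which is in turn an induced subgraph of $B_n$; therefore $\iota$ factors as $I(\Gamma_n-N[w_n])\hookrightarrow I(Y_n)\hookrightarrow I(B_n)$. Now $\rho$ fixes $Y_n$ setwise and carries the first of these inclusions to $I(B_n-N[v_n])\hookrightarrow I(B_n-v_n)=I(Y_n)$, which is exactly the inclusion shown to be null-homotopic in Proposition~\ref{proposition key}(4). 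Because $\rho$ is a homeomorphism, the first inclusion is itself null-homotopic, and hence so is the composite $\iota$.

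Putting these together gives $I(\Gamma_n)\simeq I(B_n)\vee\Sigma^6 I(A_{n-4})$, and substituting the decomposition $I(B_n)\simeq I(Y_n)\vee\Sigma^6 I(A_{n-4})$ supplied by Proposition~\ref{proposition key}(4) produces
\[ I(\Gamma_n)\simeq I(Y_n)\vee\Sigma^6 I(A_{n-4})\vee\Sigma^6 I(A_{n-4})=I(Y_n)\vee\bigvee_2\Sigma^6 I(A_{n-4}), \]
as claimed. The only point requiring care is to check that the hypothesis $n\ge 5$ is in force throughout, since this is precisely the range in which both Lemma~\ref{lemma A - B}(2) and Proposition~\ref{proposition key}(4) are available; for the listed small even cases the decomposition is instead read off directly. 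I expect the factorization step to be the main conceptual hurdle, as it is what lets us recycle the null-homotopy already proved for $B_n$ rather than re-proving one for $\Gamma_n$ from scratch.
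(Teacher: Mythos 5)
Your proposal is correct and follows essentially the same route as the paper's own proof: the cofiber sequence at $w_n=(n,4)$, the factorization of the inclusion through $I(Y_n)$, the reflection $(x,y)\mapsto(x,7-y)$ to transport the problem to the inclusion $I(B_n-N[v_n])\hookrightarrow I(B_n-v_n)$ handled by Proposition \ref{proposition key}(4), the identification $\Sigma I(\Gamma_n-N[w_n])\simeq\Sigma^6 I(A_{n-4})$ via Lemma \ref{lemma A - B}(2), and the final substitution of $I(B_n)\simeq I(Y_n)\vee\Sigma^6 I(A_{n-4})$. All hypotheses are indeed available for $n\ge 5$, so the argument goes through exactly as in the paper.
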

\begin{proof}
Set $w_n = (n,4)$. We first show that the inclusion $I(\Gamma_n - N[w_n]) \hookrightarrow I(\Gamma_n - w_n)$ is null-homotopic. Note that the inclusion $I(\Gamma_n - N[w_n]) \hookrightarrow I(\Gamma_n - w_n)$ has the following factorization:
\[ I(\Gamma_n - N[w_n]) \to I(Y_n) \to I(\Gamma_n - w_n).\]
Hence we show that the inclusion $I(\Gamma_n - N[w_n]) \hookrightarrow I(Y_n)$ is null-homotopic.

Let $\alpha$ be the automorphism of $\Gamma_n$ sending a vertex $(i,j)$ to the vertex $(i, 7 - j)$. Then $\alpha$ restricts to isomorphisms $B_n - N[w_n]\xrightarrow{\cong}B_n - N[v_n]$ (see Figure \ref{figure B - N[w] = B - N[v]}) and $Y_n\xrightarrow{\cong}Y_n$. Hence it suffices to see that the inclusion $I(\Gamma_n - N[v_n])\hookrightarrow I(Y_n)$ is null-homotopic. Since
\[ I(B_n - N[v_n]) = I(\Gamma_n - N[v_n]) \hookrightarrow I(Y_n) = I(B_n - v_n),\]
Proposition \ref{proposition key} implies that the inclusion $I(\Gamma_n - N[v_n]) \hookrightarrow I(Y_n)$ is null-homotopic. This concludes that the inclusion $I(\Gamma_n - N[w_n]) \hookrightarrow I(Y_n)$ is null-homotopic, as desired.

Theorem \ref{thm cofiber} implies that $I(\Gamma_n) \simeq I(\Gamma_n - w_n) \vee \Sigma I(\Gamma_n -N[w_n])$. Lemma \ref{lemma A - B} implies that $I(\Gamma_n - N[w_n]) \cong I(\Gamma_n - N[v_n]) \simeq \Sigma^5 I(A_{n-4})$. Since $\Gamma_n - w_n = B_n$, (4) of Proposition \ref{proposition key} implies
\[ I(\Gamma_n) \simeq I(\Gamma_n - w_n) \vee \Sigma I(\Gamma_n - N[w_n]) \simeq I(Y_n) \vee \bigvee_2 \Sigma^6 I(A_{n-4}).\]
This completes the proof.
\end{proof}

\subsection{Proof of Theorem \ref{main theorem}}

The goal of this subsection is to complete the proof of Theorem \ref{main theorem}, which determines the homotopy type of $I(\Gamma_n)$. By Proposition \ref{proposition key 2}, the homotopy type of $I(\Gamma_n)$ is determined by the homotopy types of $I(X_n)$, $I(Y_n)$, and $I(A_n)$. The homotopy types of $I(X_n)$ and $I(Y_n)$ have been already determined in Lemmas \ref{lemma X} and \ref{lemma Y}. Thus the next task is to determine the homotopy type of $I(A_n)$.

\begin{lemma} \label{lemma inductive}
The following hold:
\begin{enumerate}[$(1)$]
\item If $2k + 1 \ge 15$, there is a following homotopy equivalence:
\[ I(A_{2k + 1}) \simeq \Big( \bigvee_3 S^{3k} \Big) \vee \Sigma^{20} I(A_{2k+1 - 14}).\]

\item There is a following homotopy equivalence:
\[ I(A_{14m + 2k + 1}) \simeq \bigvee_3 S^{21m + 3k} \vee \cdots \vee \bigvee_3 S^{20m + 3k + 1} \vee \Sigma^{20m} I(A_{2k+1})\]
\end{enumerate}
\end{lemma}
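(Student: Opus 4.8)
The plan is to prove (1) by applying Corollary \ref{corollary key} twice, and then to derive (2) from (1) by an induction on $m$. Throughout I will freely use that suspension distributes over wedges and shifts sphere dimension, $\Sigma^j S^d \simeq S^{d+j}$.

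For (1), I would start from Corollary \ref{corollary key}, which for $n \ge 8$ gives $I(A_n) \simeq I(X_n) \vee \Sigma^4 I(Y_{n-3}) \vee \Sigma^{10} I(A_{n-7})$. Setting $n = 2k+1$ with $2k+1 \ge 15$ (so every index appearing below stays $\ge 8$), the index $n$ is odd, so Lemma \ref{lemma X} makes $I(X_{2k+1}) \simeq \mathrm{pt}$ drop out, while $n-3 = 2k-2$ is even and Lemma \ref{lemma Y} gives $\Sigma^4 I(Y_{2k-2}) \simeq \Sigma^4 S^{3k-4} \simeq S^{3k}$. This yields
\[ I(A_{2k+1}) \simeq S^{3k} \vee \Sigma^{10} I(A_{2k-6}). \]
Now $2k-6$ is even, so a second application of Corollary \ref{corollary key} to $I(A_{2k-6})$ produces $I(X_{2k-6}) \simeq S^{3k-10}$ and $\Sigma^4 I(Y_{2k-9}) \simeq S^{3k-10}$ (here $2k-9$ is odd), giving $I(A_{2k-6}) \simeq \bigvee_2 S^{3k-10} \vee \Sigma^{10} I(A_{2k-13})$. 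Suspending by $\Sigma^{10}$ and substituting, the two new spheres become $S^{3k}$, and since $2k-13 = 2k+1-14$, the three copies of $S^{3k}$ assemble into $\bigvee_3 S^{3k}$ with residual tail $\Sigma^{20} I(A_{2k+1-14})$, which is exactly (1).

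For (2), I would induct on $m$. The base case $m=0$ is the tautology $I(A_{2k+1}) \simeq \Sigma^0 I(A_{2k+1})$, the wedge of spheres being empty since the range $20m+3k+1 \le i \le 21m+3k$ is empty when $m=0$. For the inductive step, I apply (1) to $I(A_{14m+2k+1})$ — writing the index as $2k'+1$ with $k' = 7m+k$, so $3k' = 21m+3k$, and noting $14m+2k+1 \ge 15$ for $m \ge 1$ — to get $I(A_{14m+2k+1}) \simeq \bigvee_3 S^{21m+3k} \vee \Sigma^{20} I(A_{14(m-1)+2k+1})$. Feeding the inductive hypothesis for $I(A_{14(m-1)+2k+1})$ into the $\Sigma^{20}$ term raises every sphere dimension by $20$, converting the range $20(m-1)+3k+1 \le i \le 21(m-1)+3k$ into $20m+3k+1 \le i \le 21m+3k-1$ and the tail $\Sigma^{20(m-1)} I(A_{2k+1})$ into $\Sigma^{20m} I(A_{2k+1})$. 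Prepending the split-off $\bigvee_3 S^{21m+3k}$ closes up the full range $20m+3k+1 \le i \le 21m+3k$, giving the claimed form.

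The only real subtlety lies in the dimension/parity bookkeeping of (1): one must verify that the three top spheres all land in dimension $3k$ (one from the $Y$-term of the first application, two from the $X$- and $Y$-terms of the second after the extra $\Sigma^{10}$) and that the residual index is exactly $2k+1-14$. Once (1) is in hand, the induction in (2) is purely formal; the single point to confirm is that the hypothesis $2k+1 \ge 15$ of (1) is never violated, which holds because the smallest index to which (1) is applied is $14 \cdot 1 + 2k + 1 \ge 15$.
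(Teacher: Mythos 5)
Your proposal is correct and follows essentially the same route as the paper: part (1) is obtained by applying Corollary \ref{corollary key} twice and evaluating the $X$- and $Y$-terms via Lemmas \ref{lemma X} and \ref{lemma Y} (the paper's proof is exactly this computation, with your dimension bookkeeping matching its displayed chain of equivalences), and part (2) is the iteration of (1), which the paper states in one line and you merely make explicit as an induction on $m$.
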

\begin{proof}
It follows from Lemmas \ref{lemma X}, \ref{lemma Y} and Corollary \ref{corollary key} that
\begin{eqnarray*}
I(A_{2k+1}) & \simeq & I(X_{2k+1}) \vee \Sigma^4 I(Y_{2k-2}) \vee \Sigma^{10} I(A_{2k-6}) \\
& \simeq & \Sigma^4 I(Y_{2k-2}) \vee \Sigma^{10} I(X_{2k-6}) \vee \Sigma^{14} I(Y_{2k-9}) \vee \Sigma^{20} I(A_{2k-13})\\
& \simeq & S^{3k} \vee S^{3k} \vee S^{3k} \vee \Sigma^{20} I(A_{2k+1 - 14})
\end{eqnarray*}
if $2k+1 \ge 15$. We deduce (2) by iterating (1).
\end{proof}

We first determine the homotopy type of $I(A_n)$ for odd $n$.

\begin{proposition} \label{proposition A odd}
  For $m \ge 0$ and $0\le k\le 6$, there is a following homotopy equivalence:
  \[
    I(A_{14m+2k+1})\simeq
    \Bigl(\bigvee_{i=20m+3k+1}^{21m+3k}\bigvee_3S^i\Bigr)
    \vee\bigvee_aS^{20m+3k},
  \]
  where $a$ is a number defined by Table \ref{table:abValues}.
\end{proposition}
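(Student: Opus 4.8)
The plan is to combine the iterated recursion of Lemma \ref{lemma inductive} with a finite list of base cases. First I would invoke Lemma \ref{lemma inductive}(2), which already gives
\[
I(A_{14m+2k+1})\simeq
\Bigl(\bigvee_{i=20m+3k+1}^{21m+3k}\bigvee_3S^i\Bigr)
\vee\Sigma^{20m}I(A_{2k+1}).
\]
This produces precisely the first wedge factor appearing in the statement, so the whole problem reduces to identifying the homotopy type of $\Sigma^{20m}I(A_{2k+1})$ for each residue $0\le k\le 6$ and checking that it is a wedge of $a$ copies of $S^{20m+3k}$, with $a$ as in Table \ref{table:abValues}.

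For $k=0,1,2,3$ I would read the base cases directly off Corollary \ref{corollary small A}, namely $I(A_1)\simeq\pt$, $I(A_3)\simeq S^3$, $I(A_5)\simeq S^6$, and $I(A_7)\simeq\bigvee_2S^9$. Since suspension sends $S^d$ to $S^{20m+d}$ and keeps a point contractible, these become wedges of $a=0,1,1,2$ copies of $S^{20m+3k}$ respectively. The one edge case worth a remark is $k=0$: here $I(A_1)$ is contractible, so $\Sigma^{20m}I(A_1)$ contributes nothing and corresponds to the empty wedge $a=0$.

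For $k=4,5,6$ the values $I(A_9),I(A_{11}),I(A_{13})$ are not tabulated, so I would compute them from Corollary \ref{corollary key}. Because $n=2k+1$ is odd we have $I(X_n)\simeq\pt$ by Lemma \ref{lemma X}, so the formula collapses to $I(A_n)\simeq\Sigma^4I(Y_{n-3})\vee\Sigma^{10}I(A_{n-7})$. Reading $I(Y_{n-3})$ off Lemma \ref{lemma Y} (an even-index $Y$, hence a single sphere) and $I(A_{n-7})$ off Corollary \ref{corollary small A}, a short dimension count places both summands in dimension $3k$ and yields $\bigvee_2S^{12}$, $\bigvee_3S^{15}$, $\bigvee_3S^{18}$ for $k=4,5,6$, giving $a=2,3,3$. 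Suspending by $20m$ then produces $\bigvee_aS^{20m+3k}$ in each case, completing the proof.

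I do not anticipate a genuine obstacle: all the homotopy-theoretic content is already encoded in Lemmas \ref{lemma X}, \ref{lemma Y}, \ref{lemma inductive} and Corollaries \ref{corollary key}, \ref{corollary small A}. The only care needed is bookkeeping---verifying that the suspension shifts collapse every summand of $\Sigma^{20m}I(A_{2k+1})$ into the single dimension $20m+3k$, and counting the resulting number of spheres correctly across the seven residue classes.
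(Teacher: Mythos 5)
Your proof is correct and is essentially the paper's own argument: the paper likewise reduces everything to the residues $I(A_{2k+1})$, $0\le k\le 6$, handling $k\le 3$ by Corollary \ref{corollary small A}, computing $I(A_9)\simeq\bigvee_2S^{12}$, $I(A_{11})\simeq\bigvee_3S^{15}$, $I(A_{13})\simeq\bigvee_3S^{18}$ from Corollary \ref{corollary key} (using $I(X_{2k+1})\simeq\pt$ and $\Sigma^4I(Y_{2k-2})\simeq S^{3k}$), and then invoking Lemma \ref{lemma inductive} for general $m$. The only difference is the order of operations---you apply Lemma \ref{lemma inductive}(2) first and then settle the seven base cases, while the paper settles the base cases first---which is immaterial.
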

\begin{proof}
The case that $14m + 2k + 1 \le 7$ has been already proved in Corollary \ref{corollary small A}.
If $14m + 2k+1 > 7$, Corollary \ref{corollary key} implies
\[ I(A_{2k+1}) \simeq I(X_{2k+1}) \vee \Sigma^4 I(Y_{2k-2}) \vee \Sigma^{10} I(A_{2k - 6}) \simeq S^{3k} \vee \Sigma^{10} I(A_{2k - 6}).\]
Hence Corollary \ref{corollary small A} implies
\[ I(A_9) \simeq S^{12} \vee \Sigma^{10} I(A_2) \simeq S^{12} \vee S^{12},\]
\[ I(A_{11}) \simeq S^{15} \vee \Sigma^{10} I(A_4) \simeq S^{15} \vee S^{15} \vee S^{15},\]
\[ I(A_{13}) \simeq S^{18} \vee \Sigma^{10} I(A_6) \simeq S^{18} \vee S^{18} \vee S^{18}.\]
Then Lemma \ref{lemma inductive} completes the proof.
\end{proof}

\begin{table}[t]
  \begin{tabular}{cccccccc}
    $n$   & 0 & 1 & 2 & 3 & 4 & 5 & 6 \\
    \hline
    $a$ & 0 & 1 & 1 & 2 & 2 & 3 & 3 \\
    $b$ & - & - & 0 & 0 & 0 & 1 & 1
  \end{tabular}
  \caption{}\label{table:abValues}
\end{table}

Next we determine the homotopy type of $I(A_n)$ for even $n$.

\begin{proposition} \label{proposition A even}
  Assume $m \ge 0$ and $0\le k\le 6$.
  Except for $I(A_2)\simeq S^2$,
  there is a following homotopy equivalence:
  \[
    I(A_{14m+2k})\simeq
    \begin{cases}
      \displaystyle
      \bigvee_2S^{21m+3k-1}\vee
      \Bigl(\bigvee_{i=20m+3k}^{21m+3k-2}\bigvee_3S^i\Bigr)
      \vee\bigvee_2S^{20m+3k-1} & (k = 0, 1) \\
      \displaystyle
      \bigvee_2S^{21m+3k-1}\vee
      \Bigl(\bigvee_{i=20m+3k-1}^{21m+3k-2}\bigvee_3S^i\Bigr)
      \vee\bigvee_bS^{20m+3k-2} & (2\le k\le 6),
    \end{cases}
  \]
  where $b$ is a number defined by Table \ref{table:abValues}.
\end{proposition}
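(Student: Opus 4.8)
The plan is to reduce the even case to the odd case already settled in Proposition \ref{proposition A odd}. First I would combine Corollary \ref{corollary key} with Lemmas \ref{lemma X} and \ref{lemma Y}. For even $n = 2\ell \ge 8$ we have $I(X_{2\ell}) \simeq S^{3\ell-1}$, while $2\ell-3 = 2(\ell-2)+1$ is odd so $\Sigma^4 I(Y_{2\ell-3}) \simeq \Sigma^4 S^{3\ell-5} \simeq S^{3\ell-1}$. Hence Corollary \ref{corollary key} collapses to the clean recursion
\[ I(A_{2\ell}) \simeq \bigvee_2 S^{3\ell-1} \vee \Sigma^{10} I(A_{2\ell-7}). \]
Writing $\ell = 7m+k$ gives $3\ell-1 = 21m+3k-1$, and the remaining index $2\ell-7 = 14m+2k-7$ is odd, so the right-hand side is governed entirely by Proposition \ref{proposition A odd}.

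Next I would split according to the residue $k$, precisely so that $2\ell-7$ lands in the normalized form of Proposition \ref{proposition A odd}. For $4 \le k \le 6$ I write $2\ell-7 = 14m+2(k-4)+1$ and apply the odd formula with parameters $(m,k-4)$; for $0 \le k \le 3$ with $m \ge 1$ I write $2\ell-7 = 14(m-1)+2(k+3)+1$ and apply it with parameters $(m-1,k+3)$. In each case I then apply $\Sigma^{10}$, which raises every sphere dimension by $10$, and wedge on the two top spheres $S^{21m+3k-1}$; since the top of the shifted triple-range is $21m+3k-2$, these two top spheres sit strictly above everything else and contribute a clean summand.

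The remaining work is bookkeeping: checking that the shifted wedge matches the claimed formula. After the shift, the triple-sphere range of the odd formula becomes $\bigvee_i \bigvee_3 S^i$ over the stated interval, and its bottom summand, a wedge of $a$ spheres, becomes $a$ copies of $S^{20m+3k-2}$ when $4 \le k \le 6$ and $a$ copies of $S^{20m+3k-1}$ when $0 \le k \le 3$. Against Table \ref{table:abValues} I would verify that $b(k) = a(k-4)$ for $4 \le k \le 6$, and that $a(k+3) = 2$ for $k=0,1$ while $a(k+3) = 3$ for $k=2,3$. This last distinction is exactly what forces the two-case phrasing of the statement: for $k=2,3$ the three bottom copies of $S^{20m+3k-1}$ have the same multiplicity as the triple-range immediately above them, so they merge and extend it down one dimension (leaving $b=0$), whereas for $k=0,1$ only two such copies appear and must be recorded separately below the range.

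Finally I would dispose of the base cases $n<8$ not reached by the recursion, namely $A_4$ and $A_6$ (and the genuine exception $A_2$), directly from Corollary \ref{corollary small A}, confirming agreement with the formula (both have empty triple-ranges and $b=0$); the cases $A_8, A_{10}, A_{12}$ reduce under the recursion to $A_1, A_3, A_5$, again supplied by Corollary \ref{corollary small A}. The main obstacle is purely the index arithmetic: keeping the $\Sigma^{10}$ shift, the reindexing across the period $14$, and the two bottom-dimension conventions mutually consistent, and in particular correctly tracking the merging of spheres at dimension $20m+3k-1$ that produces the two-case presentation.
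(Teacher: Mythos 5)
Your proposal is correct and follows exactly the paper's route: base cases from Corollary \ref{corollary small A}, the recursion $I(A_{14m+2k})\simeq \bigvee_2 S^{21m+3k-1}\vee\Sigma^{10}I(A_{14m+2k-7})$ from Corollary \ref{corollary key} together with Lemmas \ref{lemma X} and \ref{lemma Y}, and then Proposition \ref{proposition A odd} applied to the odd index $14m+2k-7$. The only difference is that you spell out the reindexing, the verification $b(k)=a(k-4)$, and the merging of the three bottom spheres for $k=2,3$, all of which the paper compresses into ``Then Proposition \ref{proposition A odd} completes the proof''; your bookkeeping is accurate.
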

\begin{proof}
The case $14m + 2k \le 7$ follows from Corollary \ref{corollary small A}. Suppose that $14m + 2k > 7$. Then Corollary \ref{corollary key} implies
\begin{eqnarray*}
I(A_{14m + 2k}) & \simeq & I(X_{14m + 2k}) \vee \Sigma^4 I(Y_{14m + 2k - 3}) \vee \Sigma^{10} I(A_{14m + 2k - 7}) \\
& \simeq & \bigvee_2 S^{21m + 3k - 1} \vee \Sigma^{10} I(A_{14m + 2k - 7}).
\end{eqnarray*}
Then Proposition \ref{proposition A odd} completes the proof.
\end{proof}

Now we complete the proof of our main theorem.

\begin{proof}[Proof of Theorem \ref{main theorem}]
Suppose $n \ge 5$. Then Proposition \ref{proposition key 2} asserts
\[ I(\Gamma_n) \simeq I(Y_n) \vee \bigvee_2 \Sigma^6 I(A_{n-4}).\]
Then the proof is completed by the combination of Propositions \ref{proposition A odd} and \ref{proposition A even}, and Lemma \ref{lemma Y}. The case $n < 5$ follows from the known results (see \cite{Kozlov}, \cite{Adamaszek2} and \cite{MW}).
\end{proof}

\section*{Acknowledgment}
The authors are grateful to the anonymous referee for useful comments. The first author and second author were partially supported by JSPS KAKENHI Grant Number JP19K14536 and JP20J00404, respectively.

\end{document}